\providecommand{\U}[1]{\protect\rule{.1in}{.1in}}
\newtheorem{theorem}{Theorem}
\newtheorem{theorem*}{Example}
\newtheorem{conjecture}[theorem]{Conjecture}
\newtheorem{corollary}[theorem]{Corollary}
\newtheorem{lemma}[theorem]{Lemma}
\newtheorem{problem}[theorem]{Problem}
\newtheorem{proposition}[theorem]{Proposition}
\newenvironment{proof}[1][Proof]{\noindent\textbf{#1.} }{\ \hfill \rule{0.5em}{0.5em}\bigskip}
\begin{document}

\title{Proper $\mathbb{Z}_{4}\times\mathbb{Z}_{2}$-colorings: structural
characterization with application to some snarks}
\author{Jelena Sedlar$^{1,3}$,\\Riste \v Skrekovski$^{2,3}$ \\[0.3cm] {\small $^{1}$ \textit{University of Split, Faculty of civil
engineering, architecture and geodesy, Croatia}}\\[0.1cm] {\small $^{2}$ \textit{University of Ljubljana, FMF, 1000 Ljubljana,
Slovenia }}\\[0.1cm] {\small $^{3}$ \textit{Faculty of Information Studies, 8000 Novo
Mesto, Slovenia }}\\[0.1cm] }
\maketitle

\begin{abstract}
A proper abelian coloring of a cubic graph $G$ by a finite abelian group $A$
is any proper edge-coloring of $G$ by the non-zero elements of $A$ such that
the sum of the colors of the three edges incident to any vertex $v$ of $G$
equals zero. It is known that cyclic groups of order smaller than 10 do not
color all bridgeless cubic graphs, and that all abelian groups of order at
least $12$ do. This leaves the question open for the four so called
exceptional groups $\mathbb{Z}_{4}\times\mathbb{Z}_{2}$, $\mathbb{Z}_{3}%
\times\mathbb{Z}_{3}$, $\mathbb{Z}_{10}$ and $\mathbb{Z}_{11}$ for snarks. It
is conjectured in literature that every cubic graph has a proper abelian
coloring by each exceptional group and it is further known that the existence
of a proper $\mathbb{Z}_{4}\times\mathbb{Z}_{2}$-coloring of $G$ implies the
existence of a proper coloring of $G$ by all the remaining exceptional groups.
In this paper, we give a characterization of a proper $\mathbb{Z}_{4}%
\times\mathbb{Z}_{2}$-coloring in terms of the existence of a matching $M$ in
a $2$-factor $F$ of $G$ with particular properties. Moreover, in order to
modify an arbitrary matching $M$ so that it meets the requirements of the
characterization, we first introduce an incidence structure of the cycles of
$F$ in relation to the cycles of $G-M.$ Further, we provide a sufficient
condition under which $M$ can be modified into a desired matching in terms of
particular properties of the introduced incidence structure. We conclude the
paper by applying the results to some oddness two snarks, in particular to
permutation snarks. We believe that the approach of this paper with some
additional refinements extends to larger classes of snarks, if not to all in general.

\end{abstract}

\textit{Keywords:} proper abelian coloring; cubic graph; snark.

\textit{AMS Subject Classification numbers:} 05C15

\section{Introduction}

Let $G$ be a bridgeless cubic graph and $A$ an abelian group. An \emph{abelian
coloring} of the graph $G$ by the group $A$, or an $A$\emph{-coloring} for
short, is any edge-coloring of $G$ by the non-zero elements of $A$ such that
the sum of the colors of the three edges incident to any vertex $v\in V(G)$
equals zero. Notice that an abelian coloring does not have to be proper. An
abelian coloring is a generalization of 3-edge-colorings of cubic graphs
proposed by Archdeacon \cite{Archdeacon1986}, wherein it was conjectured that
an $A$-coloring exists for each bridgeless cubic graph and each abelian group
$A$ of order at least five. This conjecture was resolved in positive by
M\'{a}\v{c}ajov\'{a} et al. \cite{Macajova2005} and the proof relies on the
fact that abelian coloring does not have to be proper.

The conjecture of Archdeacon \cite{Archdeacon1986} left out the group
$\mathbb{Z}_{2}\times\mathbb{Z}_{2},$ indicating that coloring graphs by this
group is particular. Notice that a $\mathbb{Z}_{2}\times\mathbb{Z}_{2}%
$-coloring of any cubic graph must be proper, since $(1,0),$ $(0,1)$ and
$(1,1)$ is the only combination of three non-zero elements of $\mathbb{Z}%
_{2}\times\mathbb{Z}_{2}$ which sums to $(0,0).$ This implies that
$\mathbb{Z}_{2}\times\mathbb{Z}_{2}$-coloring is equivalent to a proper
$3$-edge-coloring of a cubic graph $G,$ and it is well known that not all
cubic graphs have such a coloring. Hence, it is natural to pose a question
whether a proper abelian colorings of cubic graphs exists and for which
abelian groups. In \cite{Macajova2005}, it was noticed that there exist groups
which do not color all bridgeless cubic graphs, these are cyclic groups of
order smaller than 10, and that all abelian groups of order at least 12 do.
The existence of proper Abelian colorings by groups $\mathbb{Z}_{4}%
\times\mathbb{Z}_{2}$, $\mathbb{Z}_{3}\times\mathbb{Z}_{3}$, $\mathbb{Z}_{10}$
and $\mathbb{Z}_{11}$ remains open, and these four groups are called
\emph{exceptional}. The following conjecture is proposed by
M\'{a}\v{c}ajov\'{a} et al. in \cite{Macajova2005}, and then reconsidered by
Kr\'{a}l' et al. in \cite{Kral2009}.

\begin{conjecture}
\label{Con_exceptional}Every bridgeless cubic graph $G$ has a proper
$A$-coloring for every exceptional abelian group $A$.
\end{conjecture}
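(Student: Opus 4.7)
The plan is to first reduce the conjecture to the single case $A = \mathbb{Z}_{4}\times\mathbb{Z}_{2}$. The abstract asserts that a proper $\mathbb{Z}_{4}\times\mathbb{Z}_{2}$-coloring of $G$ forces the existence of a proper coloring by each of the other three exceptional groups, so it suffices to prove that every bridgeless cubic graph admits a proper $\mathbb{Z}_{4}\times\mathbb{Z}_{2}$-coloring. This reduction is essentially bookkeeping and will not be the main difficulty.

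The substantive part of the proof would invoke the structural characterization developed in the paper: a proper $\mathbb{Z}_{4}\times\mathbb{Z}_{2}$-coloring of $G$ exists precisely when some $2$-factor $F$ of $G$ contains a matching $M$ satisfying the prescribed properties. Since $G$ is bridgeless and cubic, Petersen's theorem supplies a perfect matching $N$, and $F:=G-N$ is then a $2$-factor. I would start with an arbitrary matching $M\subseteq E(F)$, for example a maximum one, and attempt to transform $(F,M)$ into a pair meeting the characterization. The paper's incidence structure of the cycles of $F$ with respect to the cycles of $G-M$ gives exactly the combinatorial object on which to operate, and the sufficient condition stated in the paper identifies when a single modification suffices.

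To cover all bridgeless cubic graphs, I would enlarge the modification toolbox beyond a single step. The natural moves are: (i) alternating-edge swaps of $M$ along cycles of $G-M$, which redistribute how $M$ meets each cycle of $F$; (ii) replacement of $F$ itself by a different $2$-factor $F'=G-N'$, where $N'$ is obtained from $N$ by symmetric difference along an alternating cycle of $G$; and (iii) simultaneous combinations of (i) and (ii). I would prove, by a discharging or potential-function argument on the incidence structure, that if neither $(F,M)$ nor any pair reachable by moves (i)--(iii) meets the sufficient condition, then $G$ must contain a small edge cut or a bridge, contradicting the hypothesis. Equivalently, one assumes a minimum counterexample and forces enough local structure to extract such a cut.

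The hard part, and the reason the conjecture is still open, is precisely showing that moves (i)--(iii) always suffice. The authors only confirm this for oddness-two snarks and for permutation snarks, so the combinatorics of the incidence structure likely degrades as the oddness of $F$ grows: with many odd cycles in $F$, the constraints imposed on $M$ along each cycle interact in tightly coupled ways, and a local swap can destroy progress made on another cycle. I expect that a general proof will either require strengthening the paper's sufficient condition into a genuine characterization of when modifications succeed, or introducing a more global exchange operation, perhaps one that simultaneously reroutes $M$ through several cycles of $F$. Identifying such a global move, and proving it always applies in bridgeless cubic graphs of arbitrary oddness, is where I would expect to expend the bulk of the effort.
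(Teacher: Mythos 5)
The statement you are asked to prove is labeled as a conjecture in the paper, and the paper does not prove it: it remains open. The authors only establish a characterization of proper $\mathbb{Z}_{4}\times\mathbb{Z}_{2}$-colorings (Theorem \ref{Lemma_karakterizacija}), two sufficient conditions (Proposition \ref{Tm_proplemP1} and Theorem \ref{Prop_correctionM}), and applications to oddness-two snarks with a $2$-factor having two odd cycles and at most one even cycle, in particular permutation snarks. So there is no ``paper's own proof'' to compare against, and your proposal should be judged on whether it closes the conjecture on its own. It does not.

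Your reduction to $A=\mathbb{Z}_{4}\times\mathbb{Z}_{2}$ is sound (it is the theorem of Kr\'{a}l' et al.\ cited in the introduction), and your identification of the characterization and the loop-cycle-incidence structure as the right machinery matches the paper. But everything after that is a wish list, not an argument. The central claim --- that if no pair $(F,M)$ reachable by your moves (i)--(iii) satisfies the sufficient condition then $G$ contains a small cut or bridge --- is asserted without any mechanism: you name ``a discharging or potential-function argument'' but define no charges, no potential, and no discharging rules, and you give no reason why failure of all modifications should localize into a cut. You yourself concede that proving moves (i)--(iii) always suffice is ``where I would expect to expend the bulk of the effort,'' which is an admission that the proof is absent. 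Two concrete obstructions you would have to overcome: first, the paper's Proposition \ref{Tm_proplemP1} requires a perfect matching in the odd-cycle-incidence graph $K_{\mathrm{odd}}$, which need not exist for an arbitrary choice of $M_{\mathrm{even}}$, and the authors explicitly flag their condition as ``far from necessary'' rather than tight; second, Theorem \ref{Prop_correctionM} fails exactly when some component of $B-\mathcal{P}$ contains an odd number of $3$-odd loops, a case the authors leave as an open problem, and nothing in your moves (i)--(iii) is shown to escape it. Until those gaps are filled with actual arguments, what you have is a research program, not a proof.
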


Moreover, in \cite{Kral2009} the above conjecture is approached from the
direction of studying relationships between Steiner colorings and abelian
colorings. For every abelian group $A$, there exists a partial Steiner triple
system $\mathcal{C}(A)$ whose points are all non-zero elements of $A$ and
blocks are all $3$-element subsets of $A-\{0\}$ with zero sum, so
$\mathcal{C}(A)$-colorings coincide with proper $A$-colorings. Two Steiner
triple systems are particularly studied, the projective plane with $7$ points
(the Fano plane), and the affine plane with 9 points. It is established that
for $i\in\{4,5,6\}$ each of these systems contains a configuration of $i$
lines, denoted by $F_{i}$ and $A_{i}$, respectively, such that a cubic graph
is $F_{i}$-colorable if and only if it is $A_{i}$-colorable. This result has
several important consequences. First, it is known that every bridgeless cubic
graph has an $F_{6}$-coloring \cite{MacajovaFano}, so this result implies that
it also has an $A_{6}$-coloring. Further, the result of Holroyd and
\v{S}koviera \cite{Holroyd2004}, asserting that every bridgeless cubic graph
has an $S$-coloring for every non-trivial Steiner triple system $S$ is also
implied. Finally, regarding the four exceptional groups it is noted that
$\mathcal{C}(\mathbb{Z}_{4}\times\mathbb{Z}_{2})$ coincides with $F_{5}$.
Although this configuration is not contained in other exceptional groups, the
configuration $A_{5}$ is, so the following theorem is also established in
\cite{Kral2009}.

\begin{theorem}
If a cubic graph has a proper $A$-coloring for $A=\mathbb{Z}_{4}%
\times\mathbb{Z}_{2}$, then it has a proper $A$-coloring for every
$A\in\{\mathbb{Z}_{3}\times\mathbb{Z}_{3},\mathbb{Z}_{10},\mathbb{Z}_{11}\}$.
\end{theorem}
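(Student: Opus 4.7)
The plan is to chain together two facts already set up in the excerpt, both phrased at the level of partial Steiner triple systems. First, suppose $G$ admits a proper $\mathbb{Z}_{4}\times\mathbb{Z}_{2}$-coloring. Since $\mathcal{C}(\mathbb{Z}_{4}\times\mathbb{Z}_{2})$ coincides with $F_{5}$, such a coloring is precisely an $F_{5}$-coloring of $G$. Invoking the equivalence of Kr\'{a}l' et al.\ for $i=5$, namely that $F_{5}$-colorability is equivalent to $A_{5}$-colorability for cubic graphs, I conclude that $G$ admits an $A_{5}$-coloring.

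Next, I would use the stated containment: the configuration $A_{5}$ sits inside the partial Steiner triple system $\mathcal{C}(A)$ for each $A\in\{\mathbb{Z}_{3}\times\mathbb{Z}_{3},\mathbb{Z}_{10},\mathbb{Z}_{11}\}$. Concretely, for each such $A$ there is an injection $\iota_{A}$ from the point set of $A_{5}$ into $A\setminus\{0\}$ carrying every line of $A_{5}$ to a zero-sum triple in $A$. Composing the $A_{5}$-coloring of $G$ edge by edge with $\iota_{A}$ then produces an edge-coloring of $G$ by non-zero elements of $A$ in which, around each vertex, the three incident labels form a zero-sum triple; this is exactly a proper $A$-coloring of $G$.

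The only substantive ingredient is the containment $A_{5}\subseteq\mathcal{C}(A)$ for each of the three remaining exceptional groups, and this is where I expect the actual checking to live. For $A=\mathbb{Z}_{3}\times\mathbb{Z}_{3}$ it is essentially immediate, since $\mathcal{C}(\mathbb{Z}_{3}\times\mathbb{Z}_{3})$ is the affine plane $AG(2,3)$ inside which $A_{5}$ is defined as a sub-configuration in the first place. For $A=\mathbb{Z}_{10}$ and $A=\mathbb{Z}_{11}$ the containment reduces to an explicit finite verification: list the five lines of $A_{5}$, choose candidate labels for its points in $A\setminus\{0\}$, and check that each line sums to $0$ in $A$. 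This verification is the only real, but entirely mechanical, obstacle; once the three embeddings $\iota_{A}$ are exhibited, the theorem follows instantly by composition, with no further structural work on $G$ required.
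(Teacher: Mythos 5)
Your proposal is correct and follows essentially the same route the paper sketches: the paper does not prove this theorem itself but cites Kr\'{a}l' et al.\ \cite{Kral2009}, and the chain it outlines --- identifying a proper $\mathbb{Z}_{4}\times\mathbb{Z}_{2}$-coloring with an $F_{5}$-coloring, passing to an $A_{5}$-coloring via the $F_{i}$/$A_{i}$ equivalence, and then using the containment of $A_{5}$ in $\mathcal{C}(A)$ for the remaining three exceptional groups --- is exactly the argument you reconstruct. Your identification of the containment $A_{5}\subseteq\mathcal{C}(A)$ as the only substantive (finite, mechanical) verification is also where the real content lies in the cited source.
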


\noindent The above theorem implies that it is sufficient to establish that
every cubic graph has a proper $\mathbb{Z}_{4}\times\mathbb{Z}_{2}$-coloring
in order to establish Conjecture \ref{Con_exceptional}.

In this paper we consider exclusively proper $\mathbb{Z}_{4}\times
\mathbb{Z}_{2}$-colorings of cubic graphs, hence if we say only $\mathbb{Z}%
_{4}\times\mathbb{Z}_{2}$-coloring we tacitly assume proper $\mathbb{Z}%
_{4}\times\mathbb{Z}_{2}$-coloring. First, for a given matching $M$ in a
$2$-factor $F$ of a snark $G$ we define the notions of an $F$-matching and the
corresponding $F$-complement. Then we give a characterization of proper
$\mathbb{Z}_{4}\times\mathbb{Z}_{2}$-colorings in terms of the existence of an
$F$-matching such that $F$-complement has a particular property. Using this
characterization, we show for example that some oddness two snarks have a
proper $\mathbb{Z}_{4}\times\mathbb{Z}_{2}$-coloring. For general snarks, we
further provide a sufficient condition for the existence of an $F$-matching
and a sufficient condition under which it can be modified to have the property
required by the introduced characterization. Since the characterization of
proper $\mathbb{Z}_{4}\times\mathbb{Z}_{2}$-colorings depends on the chosen
$2$-factor $F$ and the chosen matching $M$ in it, given that for a same snark
$G$ there exist multiple choices for $F$ and $M,$ the snarks not covered by
the results of this paper should be rare. Finally, the introduced sufficient
conditions applied to oddness two snarks yield that every such snark which has
a $2$-factor with precisely two odd cycles and at most one even cycle has a
proper $\mathbb{Z}_{4}\times\mathbb{Z}_{2}$-coloring. Extending the two
sufficient conditions to the proof that all snarks indeed have an $F$-matching
whose $F$-complement has the desired properties we leave for further work.

\section{A characterization of proper $\mathbb{Z}_{4}\times\mathbb{Z}_{2}%
$-colorings}

In this section we give a characterization of $\mathbb{Z}_{4}\times
\mathbb{Z}_{2}$-colorable graphs via matchings in a $2$-factor of a cubic
graph. More precisely, we will establish that a cubic graph $G$ is
$\mathbb{Z}_{4}\times\mathbb{Z}_{2}$-colorable if by consecutive edge
reductions along a specific matching $M$ of $G$ it reduces to a $3$-colorable
graph. Not any matching $M$ of $G$ will suffice, $M$ has to be a matching in a
$2$-factor $F$ of $G$ such that $H=G-M$ has a particular form. Before we
proceed, let us say that the property of a proper abelian coloring that the
sum of the three colors at each vertex equals zero is also called the
\emph{zero sum} property, and such coloring is called a \emph{zero sum}
coloring. We first need the following auxiliary lemma. For a $\mathbb{Z}%
_{k}\times\mathbb{Z}_{2}$-coloring $\sigma=(x,y)$ of a snark $G,$ let
$X_{i}=\{e\in E(G):x(e)=i\}$ for $i\in\mathbb{Z}_{k},$ and $Y_{j}=\{e\in
E(G):y(e)=j\}$ for $j\in\mathbb{Z}_{2}.$

\begin{lemma}
\label{Lemma_PM}Let $\sigma=(x,y)$ be a $\mathbb{Z}_{k}\times\mathbb{Z}_{2}%
$-coloring of a snark $G.$ The set $Y_{0}=\{e\in E(G):y(e)=0\}$ is a perfect
matching in $G$ for every $k\in\{4,5\}.$
\end{lemma}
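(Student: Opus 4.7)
The plan is to decouple the zero-sum condition on the two coordinates and focus entirely on the second coordinate. At every vertex $v$, if the three incident edges carry colors $(x_1,y_1)$, $(x_2,y_2)$, $(x_3,y_3)$, the zero-sum condition forces $y_1+y_2+y_3=0$ in $\mathbb{Z}_2$. Hence the number of edges incident to $v$ with $y$-value $1$ is even, so either exactly one incident edge lies in $Y_0$ or all three do. To prove the lemma it therefore suffices to rule out, at every vertex, the case in which all three incident edges belong to $Y_0$.

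Assume for contradiction that at some vertex $v$ the three incident edges have colors $(x_1,0)$, $(x_2,0)$, $(x_3,0)$. Since the coloring uses only non-zero group elements, each $x_i$ is a non-zero element of $\mathbb{Z}_k$, and since the coloring is proper the three values $x_1,x_2,x_3$ are pairwise distinct. The zero-sum condition on the first coordinate now requires three pairwise distinct non-zero elements of $\mathbb{Z}_k$ summing to $0$. For $k=4$, the only such triple is $\{1,2,3\}$, whose sum is $2\not\equiv 0\pmod 4$. For $k=5$, the four candidate triples $\{1,2,3\},\{1,2,4\},\{1,3,4\},\{2,3,4\}$ have sums $1,2,3,4\pmod 5$, again none equal to $0$. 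In either case no such triple exists, contradicting the assumption.

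Consequently exactly one edge incident to each vertex has $y$-value $0$, which is precisely the statement that $Y_0$ is a perfect matching of $G$. The argument does not use any particular structure of a snark; the properness of $\sigma$ and the restriction $k\in\{4,5\}$ are the only ingredients. The only step where care is needed is invoking properness to guarantee distinctness of the $x_i$: without it, a configuration such as $(1,0),(1,0),(2,0)$ would satisfy the zero-sum condition in $\mathbb{Z}_4\times\mathbb{Z}_2$ and the conclusion would fail. Since the finite case check in $\mathbb{Z}_4$ and $\mathbb{Z}_5$ is trivial, I foresee no real obstacle in carrying out the proof.
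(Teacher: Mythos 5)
Your proof is correct and follows essentially the same route as the paper's: reduce to the $y$-coordinate via the zero-sum condition, rule out the all-zero case at a vertex by noting that properness and the nowhere-zero condition would force three pairwise distinct non-zero elements of $\mathbb{Z}_k$ summing to zero, and verify by a finite check that no such triple exists for $k\in\{4,5\}$. The only difference is that you carry out the finite check explicitly where the paper simply asserts it.
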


\begin{proof}
Let $v$ be any vertex of $G$ and $e_{1},$ $e_{2}$ and $e_{3}$ the three edges
incident to $v.$ By the definition of a proper abelian coloring it must hold
$y(e_{1})+y(e_{2})+y(e_{3})=0,$ which implies that either all three colors
$y(e_{1}),$ $y(e_{2}),$ $y(e_{3})$ are equal to $0,$ or precisely one of them
is equal to $0.$

Assume first that $y(e_{1})=y(e_{2})=y(e_{3})=0.$ This implies that
$x(e_{i})\not =0$ for every $i\in\{1,2,3\},$ otherwise $\sigma$ contains an
edge colored by $(0,0),$ a contradiction. If $x(e_{i})=x(e_{j})$ for some
$i\not =j,$ then $\sigma(e_{i})=\sigma(e_{j})$, so $\sigma$ would not be
proper. Hence, $x$ must also be proper at $v,$ i.e. $x(e_{1}),$ $x(e_{2})$ and
$x(e_{3})$ must be three pairwise distinct non-zero elements of $\mathbb{Z}%
_{k}$ whose sum is zero, and such three elements do not exist in
$\mathbb{Z}_{k}$ for any $k\in\{4,5\}.$

The only remaining possibility is that precisely one of the colors $y(e_{1}),$
$y(e_{2}),$ $y(e_{3})$ is equal to $0,$ so $Y_{0}=\{e\in E(G):y(e)=0\}$ is a
perfect matching in $G.$
\end{proof}

Before we proceed with the main result of this section, we first need several
notions. First, an \emph{end-edge} of a path is an edge of the path incident
to an end-vertex. Notice that any path of length at least two has precisely
two end-edges. Let $G$ be a snark, $F$ a $2$-factor in $G,$ $M$ a matching in
$F,$ and $H=G-M.$ Notice that $H$ contains vertices of degree $2$ and $3,$
which will shortly be called $2$\emph{-vertices} and $3$\emph{-vertices},
respectively. An $F$\emph{-path} in $H$ is a path $P$ in $H$ for which the
following hold:

\begin{enumerate}
\item end-vertices of $P$ are $3$-vertices of $H,$

\item all interior vertices of $P$ are $2$-vertices of $H,$

\item end-edges of $P$ belong to $F.$
\end{enumerate}

\noindent An $F$\emph{-matching} of $H$ is any collection of vertex disjoint
$F$-paths in $H$ such that every $3$-vertex of $H$ is incident to precisely
one of the paths in the collection. For a given $F$-matching of $H,$ an
$F$\emph{-complement} is a subgraph of $H$ induced by all edges of $H$ not
contained in the paths of the $F$-matching. Notice that an $F$-complement is a
collection of vertex disjoint cycles in $H$ and all $3$-vertices of $H$ are
contained on cycles in the $F$-complement. In order to distinguish them from
cycles in $F,$ the cycles of the $F$-complement in $H$ will be called
\emph{loops}. A loop of an $F$-complement is $3$\emph{-even} if it contains an
even number of $3$-vertices of $H,$ otherwise it is $3$\emph{-odd}. An
$F$-complement is $3$\emph{-even} if all its loops are $3$-even. We can now
characterize proper $\mathbb{Z}_{4}\times\mathbb{Z}_{2}$-colorings of cubic
graphs as follows.

\begin{theorem}
\label{Lemma_karakterizacija}A cubic graph $G$ has a proper $\mathbb{Z}%
_{4}\times\mathbb{Z}_{2}$-coloring if and only if there exists a $2$-factor
$F$ in $G$ and a matching $M$ in $F$ such that $H=G-M$ has an $F$-matching
whose $F$-complement is $3$-even.
\end{theorem}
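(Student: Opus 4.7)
The plan is to exploit the parity splitting $\mathbb{Z}_4 = \{0,2\} \sqcup \{1,3\}$, in which $2$ is self-inverse while $1$ and $3$ are negatives of one another. For the forward direction, suppose $\sigma=(x,y)$ is a proper $\mathbb{Z}_{4}\times\mathbb{Z}_{2}$-coloring of $G$. By Lemma~\ref{Lemma_PM} the set $F=E(G)\setminus Y_{0}$ is a $2$-factor. First I would define $M=\{e\in F:x(e)=0\}$; properness forbids two $F$-edges at a common vertex from both receiving color $(0,1)$, so $M$ is indeed a matching in $F$. A short inspection of which zero-sum triples in $\mathbb{Z}_4$ are compatible with properness yields the following classification: at each $3$-vertex of $H=G-M$ the two $F$-edges carry $x$-values $\{1,2\}$ or $\{2,3\}$ (so exactly one $F$-edge is colored $2$ and the $Y_0$-edge is colored $1$ or $3$), whereas at each $2$-vertex the two non-$M$ edges either both carry $x=2$ or are colored $1$ and $3$. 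I would then take the $F$-matching to be the collection of maximal paths in $H$ all of whose edges have $x=2$. By the classification, each such path joins two $3$-vertices through $F$-end-edges, passes only through all-even $2$-vertices as interior vertices, and covers every $3$-vertex exactly once; the corresponding $F$-complement is the subgraph of $H$ induced by the edges with $x\in\{1,3\}$, which is $2$-regular on its support and whose loops alternate between $F$- and $Y_0$-edges.

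To establish $3$-evenness of each loop I would traverse it and track how the $x$-value evolves between consecutive loop edges. The local sum-zero constraint sends $x\mapsto -x$ at a $2$-vertex on the loop (its third, off-loop edge lies in $M$ and has color $0$) and $x\mapsto -x+2$ at a $3$-vertex on the loop (its third, off-loop edge is an $F$-matching path edge, with color $2$). Composing these affine maps of $\mathbb{Z}_4$ around a loop of length $\ell$ yields $x\mapsto (-1)^{\ell}x+C$; since $\ell$ is necessarily even, cyclic consistency reduces to $C\equiv 0\pmod 4$, and a short calculation shows $C\equiv 2n_3\pmod 4$, where $n_3$ is the number of $3$-vertices on the loop. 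This vanishes modulo $4$ precisely when $n_3$ is even, i.e., when the loop is $3$-even.

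For the backward direction I would reverse the construction: set $y=1$ on $F$, $y=0$ on $Y_0$, $x=0$ on $M$, $x=2$ on every edge of the $F$-matching paths (consistent because $-2\equiv 2\pmod 4$ makes the $2$-vertex rule trivial), and on each loop assign $x\in\{1,3\}$ by starting with an arbitrary value on one edge and propagating via \emph{flip at a $2$-vertex, preserve at a $3$-vertex}, as dictated by the local sum-zero constraints. The same affine-composition computation guarantees that the $3$-even hypothesis is exactly what closes the propagation consistently around each loop, and a routine vertex-by-vertex check then confirms both the zero-sum and the properness of $\sigma$. The principal obstacle is to identify the correct $F$-matching from the coloring, namely the maximal $x=2$ paths, and to carry out the cyclic-consistency computation cleanly; once this equivalence with $3$-evenness is in place, both directions follow from straightforward case checks.
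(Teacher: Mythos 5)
Your proposal is correct and follows essentially the same route as the paper: the same choice $F=G-Y_0$, $M=X_0$, the $F$-matching given by the maximal $x=2$ paths (the paper's $X_2=\mathcal{P}$), and the same propagation rule (flip at $2$-vertices, preserve at $3$-vertices) for the converse. Your monodromy computation $C\equiv 2n_3\pmod 4$ is just a repackaging of the paper's parity argument (summing the zero-sum constraints over a loop to count outgoing edges with $x=2$), so there is no substantive difference in method.
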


\begin{proof}
Let $G$ be a $\mathbb{Z}_{4}\times\mathbb{Z}_{2}$-colorable cubic graph. We
will provide $F$ and $M$ with the desired properties. Denote by $\sigma=(x,y)$
a $\mathbb{Z}_{4}\times\mathbb{Z}_{2}$-coloring of $G.$ Lemma \ref{Lemma_PM}
implies that $Y_{0}=\{e\in E(G):y(e)=0\}$ is a perfect matching in $G,$ so
$F=G-Y_{0}$ is a $2$-factor in $G.$ Notice that $X_{0}\cap Y_{0}=\emptyset,$
otherwise $\sigma$ would contain an edge colored by $(0,0)$. Hence,
$X_{0}\subseteq E(F).$ Let us further show that\textit{ }$X_{0}$\emph{ is a
matching in }$F.$ Assume to the contrary that there are two edges $e_{1}$ and
$e_{2}$ in $X_{0}$ incident to a same vertex $v$ of $F.$ Since $X_{0}\cap
Y_{0}=\emptyset$ and $e_{1},e_{2}\in X_{0},$ it follows that $e_{1}%
,e_{2}\not \in Y_{0},$ hence $y(e_{1})=y(e_{2})=1.$ Also, since $e_{1}$ and
$e_{2}$ belong to $X_{0},$ we have $x(e_{1})=x(e_{2})=0.$ We conclude that
$\sigma(e_{1})=\sigma(e_{2})=(0,1),$ so $\sigma$ is not proper, a
contradiction. Hence, $X_{0}$ is a matching in $F,$ so we denote $M=X_{0}.$

Recall that $X_{2}=\{e\in E(G):x(e)=2\}$ and let us establish that $X_{2}%
$\emph{ is an }$F$\emph{-matching of }$H=G-M.$ Since $x$ is a zero sum
coloring of $G,$ every $2$-vertex of $H$ is incident either to two or to none
of the edges of $X_{2}.$ Also, colors of $x$ on the three edges incident to
any $3$-vertex of $H$ are either $2,1,1$ or $2,3,3,$ so every $3$-vertex of
$H$ is incident to precisely one edge of $X_{2}.$ Hence, $X_{2}$ is a
collection of vertex disjoint paths in $H$ whose end-vertices are $3$-vertices
of $H$ and cover all of them.

It remains to show that \emph{every path of }$X_{2}$\emph{ begins and ends
with an edge of }$F,$ i.e. that every edge of $X_{2}$ incident to a $3$-vertex
of $H$ is included in $F.$ Assume to the contrary, that there exists an
end-vertex $v$ of a path of $X_{2},$ such that the only edge $e_{1}$ of
$X_{2}$ incident to $v$ is not included in $F.$ Hence, $e_{1}\in Y_{0}$ and
$y(e_{1})=0.$ Let $e_{2}$ and $e_{3}$ be the remaining two edges of $G$
incident to $v,$ then $y(e_{2})=y(e_{3})=1.$ Since $e_{1}$ belongs to $X_{2},$
it follows that $x(e_{1})=2.$ Also, since $v$ is a $3$-vertex of $H,$ this
implies $v$ is not incident to edges of $M,$ hence either $x(e_{2}%
)=x(e_{3})=1$ or $x(e_{2})=x(e_{3})=3.$ Either way, we obtain $\sigma
(e_{2})=\sigma(e_{3}),$ a contradiction with $\sigma$ being proper. We
conclude that $X_{2}$ is a $F$-matching of $H$ and we denote $X_{2}%
=\mathcal{P}.$

To establish that $\mathbb{Z}_{4}\times\mathbb{Z}_{2}$-colorable cubic graphs
have a $2$-factor $F$ and a matching $M$ in $F$ with desired properties, it
remains to establish that \emph{the }$F$\emph{-complement }$\mathcal{L}$\emph{
of }$\mathcal{P}$\emph{ is }$3$\emph{-even}. Assume to the contrary that it is
not $3$-even. Let $L$ be a loop of $\mathcal{L}$ which contains odd number of
$3$-vertices of $H.$ Let $v$ be a vertex of $H$ included in $L$ and $e_{1}$
and $e_{2}$ the two edges of $L$ incident to $v.$ If $v$ is a $2$-vertex of
$H$, then it must hold that $x(e_{1})=1$ and $x(e_{2})=3$ or vice versa, since
the third edge $e_{3}$ of $G$ incident to $v$ is included in $M,$ hence
$x(e_{3})=0.$ Otherwise, if $v$ is a $3$-vertex of $H$ included in $L,$ then
it must be either $x(e_{1})=x(e_{2})=1$ or $x(e_{1})=x(e_{2})=3,$ since the
third edge $e_{3}$ of $G$ incident to $v$ is an end-edge of a path of
$\mathcal{P}$ and hence $x(e_{3})=2.$ By the zero sum property the sum of the
$x$ coordinate of the edges at each vertex is $0$, hence the number of edges
that are going out of $L$ with value $x(e)=2$ must be even, and consequently
the number of $3$-vertices of $F$ on $L$ is even. \bigskip

Let us now prove the other direction of the equivalence, i.e. let us assume
that there exists a $2$-factor $F$ in $G$ and a matching $M$ in $F$ such that
$H=G-M$ has an $F$-matching $\mathcal{P}$ whose $F$-complement $\mathcal{L}$
is $3$-even. We define a $\mathbb{Z}_{4}\times\mathbb{Z}_{2}$-coloring
$\sigma=(x,y)$ of $G$ as follows. First we define $y(e)=1$ if $e\in F,$ and
$y(e)=0$ otherwise. Obviously, $y$ is a zero sum coloring of $G.$ Next, we
define $x(e)=0$ if and only if $e\in M,$ also $x(e)=2$ if and only if $e\in
E(\mathcal{P}).$ It remains to define the values of $x$ on the edges of
$\mathcal{L}.$ Notice that $\mathcal{L}$ is a collection of loops of $H,$ i.e.
cycles of $H$ which are also cycles in $G.$ Since $\mathcal{P}$ is an
$F$-matching, this implies that for every $3$-vertex of $H$ which belongs to a
loop of $\mathcal{L}$ precisely one edge incident to it in $\mathcal{L}$
belongs to $F.$ Also, for every $2$-vertex of $H$ which belongs to
$\mathcal{L}$ precisely one of the two edges incident to it in $\mathcal{L}$
belong to $\mathcal{L}$. This implies that every loop in $\mathcal{L}$ is of
an even length. If a loop $L$ of $\mathcal{L}$ contains zero $3$-vertices of
$H,$ then we define the value of $x$ on the edges of such loop to be $1$ and
$3$ alternatively, and this can be done since $L$ is of an even length. On the
other hand, if a loop $L$ of $\mathcal{L}$ contains a $3$-vertex of $H,$ we
choose a $3$-vertex $v_{0}$ which belongs to $L$ and we denote $L=v_{0}%
v_{1}\cdots v_{k}v_{0}.$ We first define $x(v_{0}v_{1})=1,$ and then we define
$x(v_{i}v_{i+1})=x(v_{i-1}v_{i})$ if $v_{i}$ is a $3$-vertex of $H$ and
$x(v_{i}v_{i+1})\in\{1,3\}\backslash\{x(v_{i-1}v_{i})\}$ if $v_{i}$ is a
$2$-vertex of $H.$ Notice that thus defined $x$ takes only values from
$\{1,3\}$ on the edges of $L.$ Also, notice that the two edges of $L$ incident
to a $3$-vertex of $H$ always have the same value of $x,$ namely for a
$3$-vertex $v_{i}$ of $L$ where $i\not =0$ this holds by the definition of
$x,$ and for $v_{0}$ of $L$ this holds since $F$-complement $\mathcal{L}$ is
$3$-even so $L$ contains an even number of $3$-vertices of $H.$

We have now defined a $\mathbb{Z}_{4}\times\mathbb{Z}_{2}$-coloring
$\sigma=(x,y)$ of $G,$ and we further have to establish that $\sigma$ is a
proper nowhere-zero zero sum coloring of $G.$ Let us first establish that
$\sigma$ is a zero sum coloring of $G,$ and to do so it is sufficient to show
that $x$ is a zero sum coloring, since we already established that $y$ is a
zero sum coloring. Let $v$ be a vertex of $H$ and $e_{1},$ $e_{2}$ and $e_{3}$
its three incident edges. We distinguish the following three cases.

\smallskip\noindent\textbf{Case 1:} $v$\emph{ is a }$3$\emph{-vertex of }$H.$
Notice that precisely one of $e_{1},$ $e_{2}$ and $e_{3}$ belongs to
$\mathcal{P},$ say $e_{1},$ so $x(e_{1})=2.$ The remaining two edges incident
to $v$ belong to a same loop $L$ of $\mathcal{L},$ so $x(e_{2})=x(e_{3})=1$ or
$x(e_{2})=x(e_{3})=3.$ Either way, we have $x(e_{1})=x(e_{2})=x(e_{3})=0.$

\smallskip\noindent\textbf{Case 2:} $v$\emph{ is a }$2$\emph{-vertex of }%
$H$\emph{ which belongs to }$\mathcal{P}$\emph{.} In this case precisely one
of $e_{1},$ $e_{2}$ and $e_{3}$ belongs to $M,$ say $e_{1},$ so $x(e_{1})=0.$
The other two edges incident to $v$ belong to $\mathcal{P},$ so $x(e_{2}%
)=x(e_{3})=2.$ We again have $x(e_{1})=x(e_{2})=x(e_{3})=0.$

\smallskip\noindent\textbf{Case 3:} $v$\emph{ is a }$2$\emph{-vertex of }%
$H$\emph{ which belongs to }$\mathcal{L}.$ We may again assume that $e_{1}\in
M,$ so $x(e_{1})=0.$ Edges $e_{1}$ and $e_{2}$ incident to $v$ belong to a
same loop $L$ of $\mathcal{L},$ so $x(e_{2})=1$ and $x(e_{3})=3,$ which again
implies $x(e_{1})=x(e_{2})=x(e_{3})=0.$

\medskip

\noindent Thus, we have established that $\sigma$ is a zero sum coloring of
$G.$

\medskip

Let us next establish that $\sigma$\emph{ is nowhere-zero and proper}. Notice
that $x(e)=0$ if and only if $e\in M$, and $y(e)=0$ if and only if
$e\not \in F.$ Since $M\subseteq E(F),$ we conclude that $\sigma$ is
nowhere-zero. It remains to establish that $\sigma$ is proper at each vertex
of $G.$ Notice that $V(G)=V(F)$, so it is sufficient to consider vertices of
$F.$ Let $v\in V(F)$ and let $e_{1},$ $e_{2}$ and $e_{3}$ be the three edges
of $G$ incident to $F$ denoted so that $e_{1},e_{2}\in F$ and $e_{3}%
\not \in F.$ This implies that $y(e_{1})=y(e_{2})=1$ and $y(e_{3})=0$.
In\ order to establish that $\sigma$ is proper it is sufficient to show that
$x(e_{1})\not =x(e_{2}).$ We distinguish the following two cases.

\smallskip\noindent\textbf{Case 1: }$v$\emph{ is incident to an edge of
}$M\subseteq E(F).$ Since $e_{1},e_{2}\in F$ are the two edges of $F$ incident
to $v,$ we conclude that precisely one of the two belongs to $M,$ say $e_{1},$
so $x(e_{1})=0.$ Since $M$ is a matching in $F,$ it follows that
$e_{2}\not \in M,$ so $x(e_{2})\not =0,$ hence $x(e_{1})\not =x(e_{2}).$

\smallskip\noindent\textbf{Case 2: }$v$\emph{ is not incident to an edge of
}$M\subseteq E(F).$ In this case $v$ is a $3$-vertex of $H=G-M.$ This implies
that one edge incident to $v$ belongs to $\mathcal{P}$ and the remaining two
edges incident to $v$ belong to $\mathcal{L}.$ Since $\mathcal{P}$ is an
$F$-matching, we know that $e_{3}$ does not belong to $\mathcal{P},$ as
$e_{3}$ does not belong to $F.$ We may assume $e_{1}$ belongs to $\mathcal{P}$
and $e_{2}$ belongs to $\mathcal{L}.$ Then $x(e_{1})=2$ and $\sigma_{1}%
(e_{2})\in\{1,3\},$ so $x(e_{1})\not =x(e_{2}).$

\smallskip We have established that $\sigma$ is also proper, so we conclude
that $\sigma$ is a proper $\mathbb{Z}_{4}\times\mathbb{Z}_{2}$-coloring of
$G.$
\end{proof}

Notice that a $3$-edge-coloring of a cubic graphs immediately yields a
$\mathbb{Z}_{4}\times\mathbb{Z}_{2}$-coloring if the three colors are replaced
by $(1,0),$ $(1,1)$ and $(2,1).$ Hence, we will apply Theorem
\ref{Lemma_karakterizacija} mainly to snarks. Nevertheless, in order to
illustrate the use of the above theorem on a simple example, we provide the
following corollary.

\begin{corollary}
Every $3$-edge-colorable cubic graph $G$ has a proper $\mathbb{Z}_{4}%
\times\mathbb{Z}_{2}$-coloring.
\end{corollary}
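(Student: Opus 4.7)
The plan is to apply Theorem \ref{Lemma_karakterizacija} directly, using the given 3-edge-coloring to produce a 2-factor $F$, a matching $M\subseteq E(F)$, and an $F$-matching of $H=G-M$ whose $F$-complement is $3$-even. The key observation is that if $M$ is actually a \emph{perfect} matching of $G$ contained in $F$, then every vertex of $H$ becomes a $2$-vertex, so the $3$-even condition is forced trivially and we do not need to construct any $F$-paths at all.

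Concretely, fix a proper $3$-edge-coloring of $G$ and let $M_1, M_2, M_3$ be its three color classes, each a perfect matching of $G$. Set $F = M_1\cup M_2$, which is a $2$-factor of $G$ (in fact its cycles are all even), and take $M = M_1$. Since $M_1\subseteq E(F)$ and $M_1$ is already a matching of $G$, it is certainly a matching of $F$. Then $H = G-M = M_2\cup M_3$ is $2$-regular, so $H$ contains no $3$-vertices at all.

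Because $H$ has no $3$-vertices, the empty collection of paths vacuously satisfies the definition of an $F$-matching: every $3$-vertex is incident to exactly one path (there are none). The corresponding $F$-complement is therefore all of $H$, which is a disjoint union of cycles (the cycles of the $2$-factor $M_2\cup M_3$). Each such loop contains zero $3$-vertices of $H$, which is an even number, so every loop is $3$-even and hence the $F$-complement is $3$-even. Theorem \ref{Lemma_karakterizacija} then yields the desired proper $\mathbb{Z}_4\times\mathbb{Z}_2$-coloring of $G$.

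There is essentially no obstacle: the only point worth double-checking is that the empty $F$-matching is admitted by the definition, which is immediate since the covering requirement is vacuous when $H$ has no $3$-vertices. The proof is intentionally a minimal illustration of Theorem \ref{Lemma_karakterizacija}, showing that $3$-edge-colorability corresponds to the degenerate instance where the matching $M$ may be taken to be a perfect matching and the $F$-matching may be taken to be empty.
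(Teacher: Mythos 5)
Your proof is correct and follows essentially the same route as the paper: both take $F$ to be the union of two colour classes, take $M$ to be a perfect matching inside $F$, and then invoke Theorem \ref{Lemma_karakterizacija}. The only difference is the choice of the $F$-matching: the paper declares each edge of $E(F)\setminus M$ to be an $F$-path, whereas you take $\mathcal{P}=\emptyset$. Your choice is in fact the cleaner one, since $H=G-M$ is $2$-regular and hence has no $3$-vertices at all, so the single edges of $E(F)\setminus M$ do not literally satisfy the paper's definition of an $F$-path (their end-vertices are $2$-vertices) and their complement in $H$ would be a perfect matching rather than a union of cycles; the empty collection vacuously is an $F$-matching, and its $F$-complement is all of $H$, a disjoint union of cycles each containing zero $3$-vertices, exactly as you argue.
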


\begin{proof}
Let $F$ be a $2$-factor in $G$ induced by all edges colored by $1$ and $2.$
Since every cycle in $F$ is of an even length, there exists a perfect matching
$M$ in $F.$ Each edge of $E(F)\backslash M$ is an $F$-path in $H$, so
$\mathcal{P=}E(F)\backslash M$ is an $F$-matching of $H.$ Notice that the
$F$-complement $\mathcal{L}$ of $\mathcal{P}$ consists of even cycles with
zero $3$-vertices of $H,$ hence Theorem \ref{Lemma_karakterizacija} implies
that $G$ is $\mathbb{Z}_{4}\times\mathbb{Z}_{2}$-colorable.
\end{proof}

\begin{figure}[h]
\begin{center}
\raisebox{-0.9\height}{\includegraphics[scale=0.6]{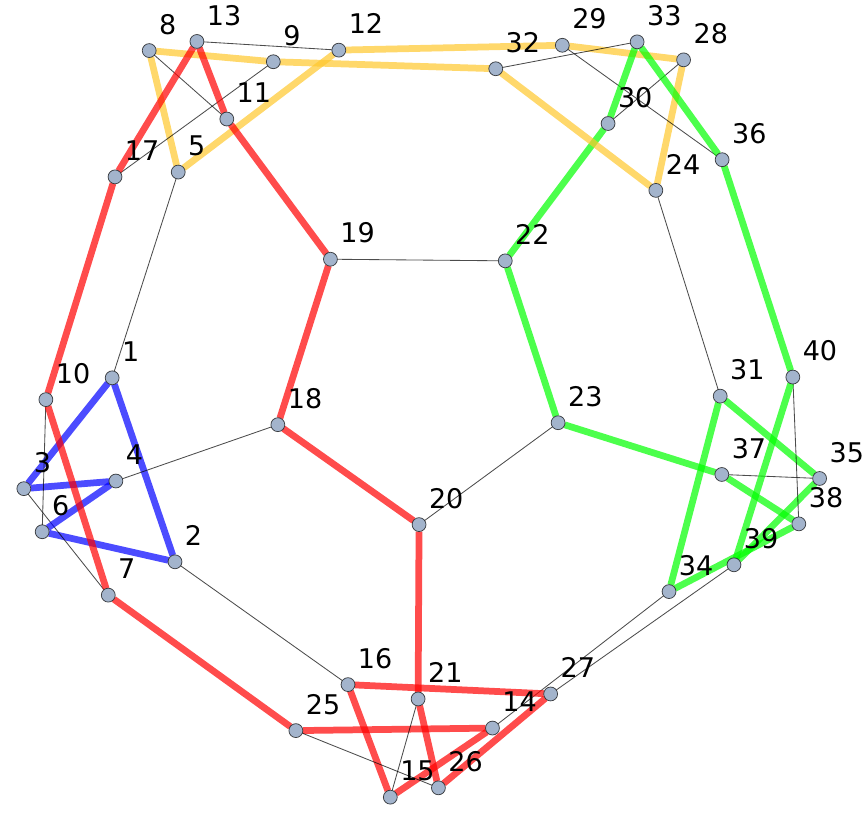}}
\end{center}
\caption{An oddness two snark $G$ on 40 vertices and a $2$-factor $F$ in $G$
with two odd and two even cycles, where different cycles of $F$ are shown in
different colors.\ }%
\label{Fig01}%
\end{figure}

\begin{figure}[ph]
\begin{center}
\raisebox{-0.9\height}{\includegraphics[scale=0.6]{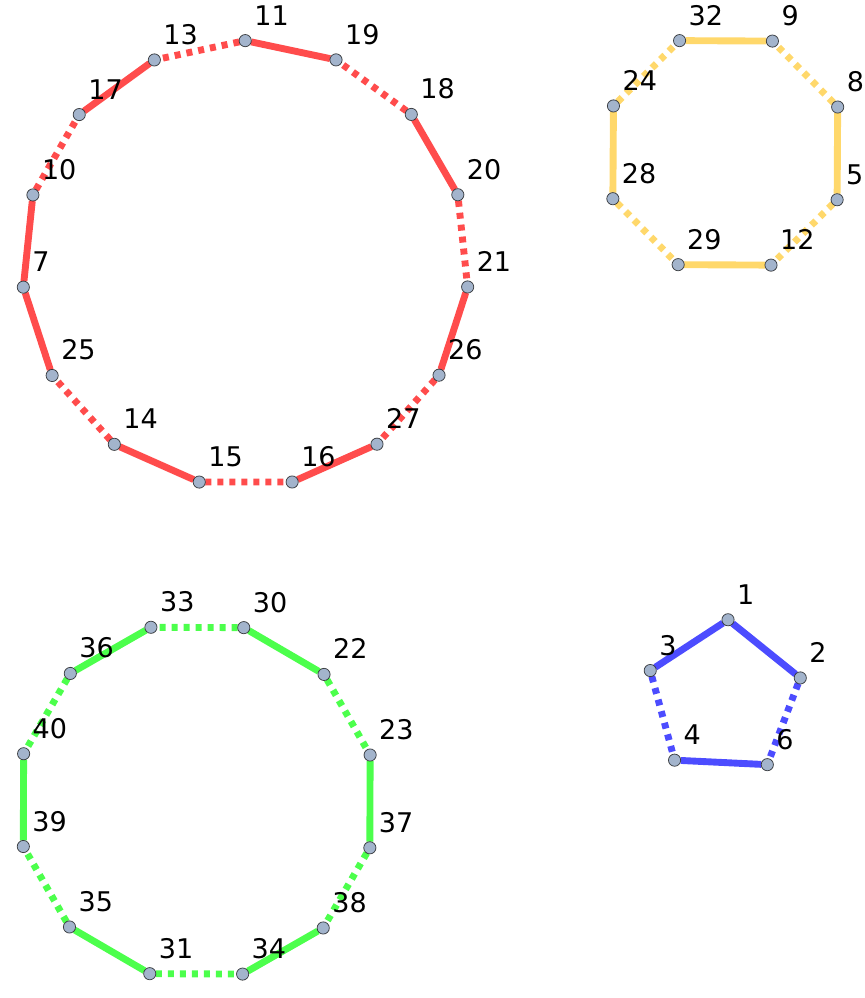}}
\end{center}
\caption{For the snark $G$ and the $2$-factor $F$ in $G$ depicted in Figure
\ref{Fig01}, this figure shows a maximum matching $M$ in $F$ which consists of
dashed edges.}%
\label{Fig02}%
\end{figure}

\begin{figure}[ph]
\begin{center}
\raisebox{-0.9\height}{\includegraphics[scale=0.6]{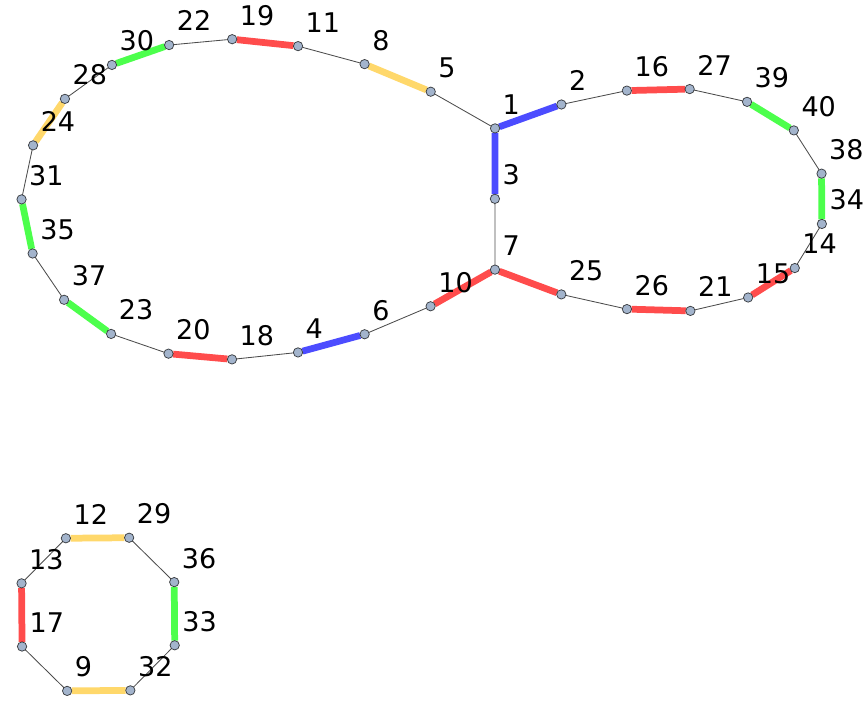}}
\end{center}
\caption{For the snark $G$ from Figure \ref{Fig01}, and the maximum matching
$M$ in $F$ from Figure \ref{Fig02}, this figure shows the corresponding graph
$H=G-M$. Notice that the main component of $H$ is a $\Theta$-graph.}%
\label{Fig03}%
\end{figure}

\section{Applying the characterization and two obstacles}

Given a snark $G$ and a $2$-factor $F$ in $G,$ for some choices of $M$ the
graph $H$ may satisfy the conditions of Theorem \ref{Lemma_karakterizacija},
so the snark $G$ has a proper $\mathbb{Z}_{4}\times\mathbb{Z}_{2}$-coloring.
For other choices of $M$ the following two problems with the application of
Theorem \ref{Lemma_karakterizacija} may arise, namely:

\begin{itemize}
\item[P1)] the graph $H=G-M$ may not have an $F$-matching;

\item[P2)] if the graph $H$ has an $F$-matching, the $F$-complement may not be
$3$-even.
\end{itemize}

\noindent In what follows, we give an illustration of all these possibilities
and to keep thing simple we restrict our consideration to oddness two snarks.

\paragraph{Oddness two snarks and the choice of $M$ which avoids problems P1)
and P2).}

Let $G$ be an oddness two snark, $F$ a $2$-factor in $G$ with precisely two
odd cycles, and $M$ a maximum matching in $F.$ Notice that $M$ covers all
vertices of $F$ except one vertex from each odd cycle of $F.$ Hence, $H=G-M$
contains precisely two vertices of degree three and all other vertices of $H$
are of degree $2.$ The graph $H$ does not have to be connected, but the pair
of $3$-vertices of $H$ must belong to a same connected component of $H$ which
is called the \emph{main} component of $H.$ All other connected components of
$H$ are called \emph{secondary}.

Let $v$ be any $2$-vertex of $H$ and $e_{1},$ $e_{2}$ the two edges of $H$
incident to $v.$ Since the third edge $e_{3}$ of $G$ incident to $v$ belongs
to $M\subseteq E(F),$ we conclude that precisely one of the edges $e_{1}$ and
$e_{2}$ belongs to $F.$ This implies that every secondary component of $H$ is
an even length cycle. Also, the main component of $H,$ since it contains
precisely two vertices of degree $3$ and all other vertices are of degree $2,$
is either a $\Theta$-graph or a \emph{kayak-paddle} graph, i.e. a graph
consisting of two vertex disjoint cycles connected by a path.

\begin{figure}[h]
\begin{center}
\raisebox{-0.9\height}{\includegraphics[scale=0.6]{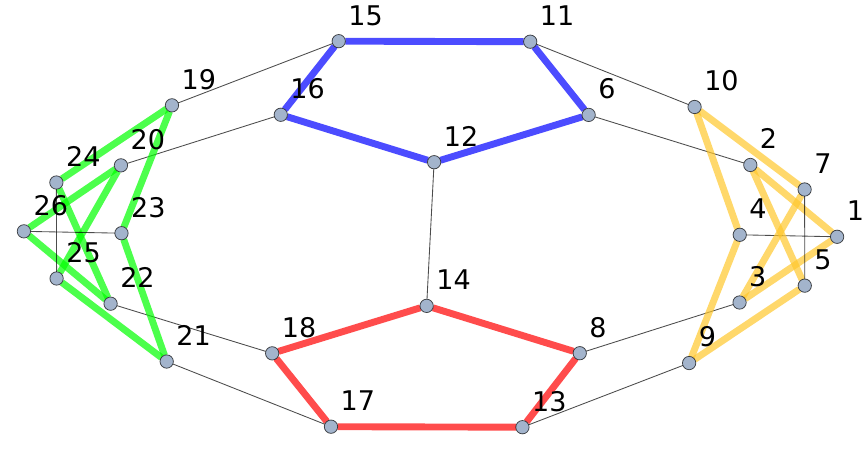}}
\end{center}
\caption{An oddness two snark $G$ on 26 vertices and a $2$-factor $F$ in $G$
with four cycles, two of which are odd. The different cycles of $F$ are shown
in different colors.}%
\label{Fig04}%
\end{figure}

\begin{figure}[ph]
\begin{center}
\raisebox{-0.9\height}{\includegraphics[scale=0.6]{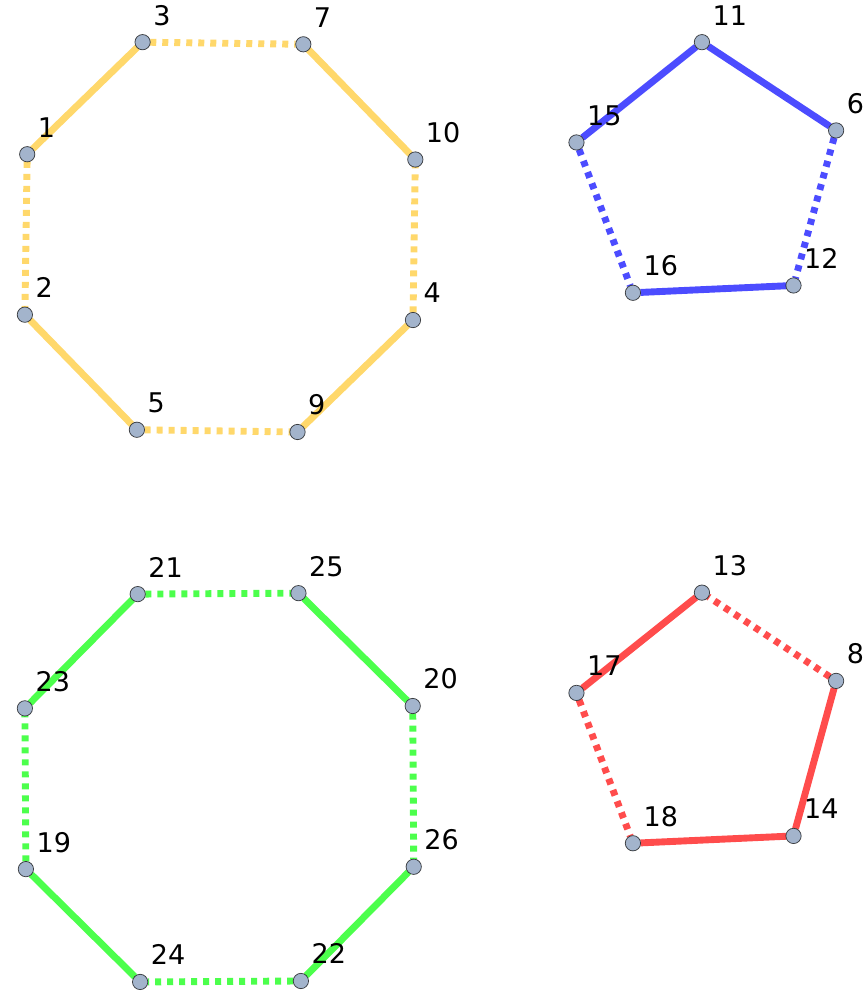}}
\end{center}
\caption{For the snark $G$ and $2$-factor $F$ in $G$ depicted in Figure
\ref{Fig04}, this figure shows a maximum matching $M$ in $F$ which consists of
dashed edges such that the graph $H=G-M$ shown in Figure \ref{Fig06a} does not
have an $F$-matching.}%
\label{Fig05a}%
\end{figure}

\begin{figure}[ph]
\begin{center}
\raisebox{-0.9\height}{\includegraphics[scale=0.6]{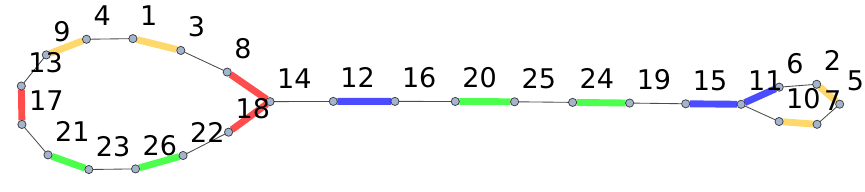}}
\end{center}
\caption{For the snark $G$ and $2$-factor $F$ in $G$ as in Figure \ref{Fig04},
and the maximal matching $M$ in $F$ as in Figure \ref{Fig05a}, this figure
shows the corresponding graph $H=G-M$. Notice that the main component of $H$
is a kayak paddle graph such that the path $P$ connecting the pair of
$3$-vertices in $H$ is not an $F$-path, so $H$ does not have an $F$-matching.}%
\label{Fig06a}%
\end{figure}

Let us first consider the case when the main component of $H$ is a $\Theta
$-graph. This case is illustrated by the snark $G$ from Figure \ref{Fig01}
with the cycles of the $2$-factor $F$ in $G$ highlighted in different colors.
A maximum matching $M$ in $F$ consists of the dashed edges from Figure
\ref{Fig02}, and the resulting graph $H=G-M$ is shown in Figure \ref{Fig03}.
Obviously, $H$ has two connected components, the main component is a $\Theta
$-graph and the only secondary component is an even length cycle. In the
following proposition we show that in the case when the main component is a
$\Theta$-graph, at least one path connecting the pair of $3$-vertices is an
$F$-path, hence an $F$-matching, and the corresponding $F$-complement is
$3$-even, i.e. such a matching $M$ has neither of the problems P1) and P2), so
$G$ has a proper $\mathbb{Z}_{4}\times\mathbb{Z}_{2}$-coloring.

\begin{proposition}
\label{Lema_theta}Let $G$ be an oddness two snark, $F$ a $2$-factor in $G$
with precisely two odd cycles, and $M$ a maximum matching in $F.$ If the main
component of $H=G-M$ is a $\Theta$-graph, then $G$ has a proper $\mathbb{Z}%
_{4}\times\mathbb{Z}_{2}$-coloring.
\end{proposition}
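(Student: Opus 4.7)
The plan is to apply Theorem \ref{Lemma_karakterizacija}: I need to exhibit an $F$-matching of $H$ whose $F$-complement is $3$-even. Since $M$ is a maximum matching in a $2$-factor with exactly two odd cycles, $H$ has exactly two $3$-vertices (one uncovered vertex from each odd cycle), which I will call $u$ and $v$; by assumption both lie in the main component, a $\Theta$-graph, so they are joined by three internally disjoint paths $P_{1},P_{2},P_{3}$ whose interior vertices are all $2$-vertices of $H$. Any $F$-matching of $H$ must contain exactly one path from $u$ to $v$ inside the $\Theta$-graph (covering both $3$-vertices), and this path must be one of the $P_{i}$. So the task reduces to finding an $i$ such that $P_{i}$ is an $F$-path, i.e.\ such that both end-edges of $P_{i}$ lie in $F$.

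The key step is a pigeonhole argument at $u$ and $v$. Since $u$ is a $3$-vertex of $H$, it is uncovered by $M$ and all three edges at $u$ lie in $H$; as $F$ is a $2$-factor, exactly two of these three edges lie in $F$. Hence exactly two of the paths $P_{1},P_{2},P_{3}$ start at $u$ with an edge of $F$. The same holds at $v$. By inclusion-exclusion, at least $2+2-3=1$ path has both end-edges in $F$, so at least one $P_{i}$ is an $F$-path. I set $\mathcal{P}=\{P_{i}\}$, which is an $F$-matching of $H$.

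Now I examine the $F$-complement $\mathcal{L}$. Removing $P_{i}$ from the $\Theta$-graph leaves the other two paths, which together form a single cycle passing through both $u$ and $v$; this cycle contains exactly two $3$-vertices of $H$, which is even. Every secondary component of $H$ is a cycle containing no $3$-vertex of $H$ (as all $3$-vertices lie in the main component), so it is trivially $3$-even; this uses the remark from the paragraph preceding the proposition that secondary components are even cycles, though parity of length is not even needed for $3$-evenness. Thus every loop of $\mathcal{L}$ contains an even number of $3$-vertices of $H$, so $\mathcal{L}$ is $3$-even, and Theorem \ref{Lemma_karakterizacija} yields a proper $\mathbb{Z}_{4}\times\mathbb{Z}_{2}$-coloring of $G$.

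The only step that requires real work is the pigeonhole argument producing an $F$-path; this is the genuine content of the proposition, and it is the place where the hypothesis that the main component is a $\Theta$-graph (rather than a kayak-paddle graph) is essential, since in the kayak-paddle case the unique path joining the two $3$-vertices may fail to be an $F$-path as illustrated in Figures \ref{Fig05a}--\ref{Fig06a}. The $3$-evenness verification is then essentially automatic because there are only two $3$-vertices in total and they are placed together on a single loop.
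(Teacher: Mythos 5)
Your proof is correct and follows essentially the same route as the paper's: both argue that at each of the two $3$-vertices two of the three incident edges lie in $F$, so by pigeonhole one of the three $\Theta$-paths is an $F$-path, and then observe that the resulting $F$-complement consists of one loop with exactly two $3$-vertices plus secondary cycles with none, hence is $3$-even. Your write-up merely makes the inclusion--exclusion count $2+2-3\geq 1$ explicit where the paper states it tersely.
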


\begin{proof}
Let $r_{1}$ and $r_{2}$ be the two vertices of degree $3$ in $H$ which belong
to the main component of $H.$ For each of the vertices $r_{1}$ and $r_{2}$ it
holds that two out of the three edges incident to it in $H$ belong to $F.$
Hence, at least one of the three paths connecting $r_{1}$ and $r_{2}$ in the
main component of $H$ is an $F$-path and forms an $F$-matching $\mathcal{P}$.
Also, the $F$-complement $\mathcal{L}$ of $\mathcal{P}$ is $3$-even, since
every secondary component of $H$ contains zero $3$-vertices of $H,$ and the
cycle of $\mathcal{\mathcal{L}}$ which is a subgraph of the main component of
$H$ contains precisely two $3$-vertices of $H.$ Hence, Theorem
\ref{Lemma_karakterizacija} implies the claim.
\end{proof}

\paragraph{Oddness two snarks and the choice of $M$ with the problem P1).}

Let us again consider an oddness two snark $G$ and a $2$-factor $F$ in $G$
with precisely two odd cycles. This time we consider a maximum matching $M$ of
$F$ such that the main component of $H=G-M$ is a kayak-paddle graph. We will
provide an example of a maximum matching $M$ in $F,$ such that the only path
$P$ in $H$ which connects a pair of $3$-vertices is not an $F$-path, i.e. it
either does not begin or it does not end with an edge of $F.$ Hence, such a
graph $H$ does not have an $F$-matching.

\begin{figure}[ph]
\begin{center}
\raisebox{-0.9\height}{\includegraphics[scale=0.6]{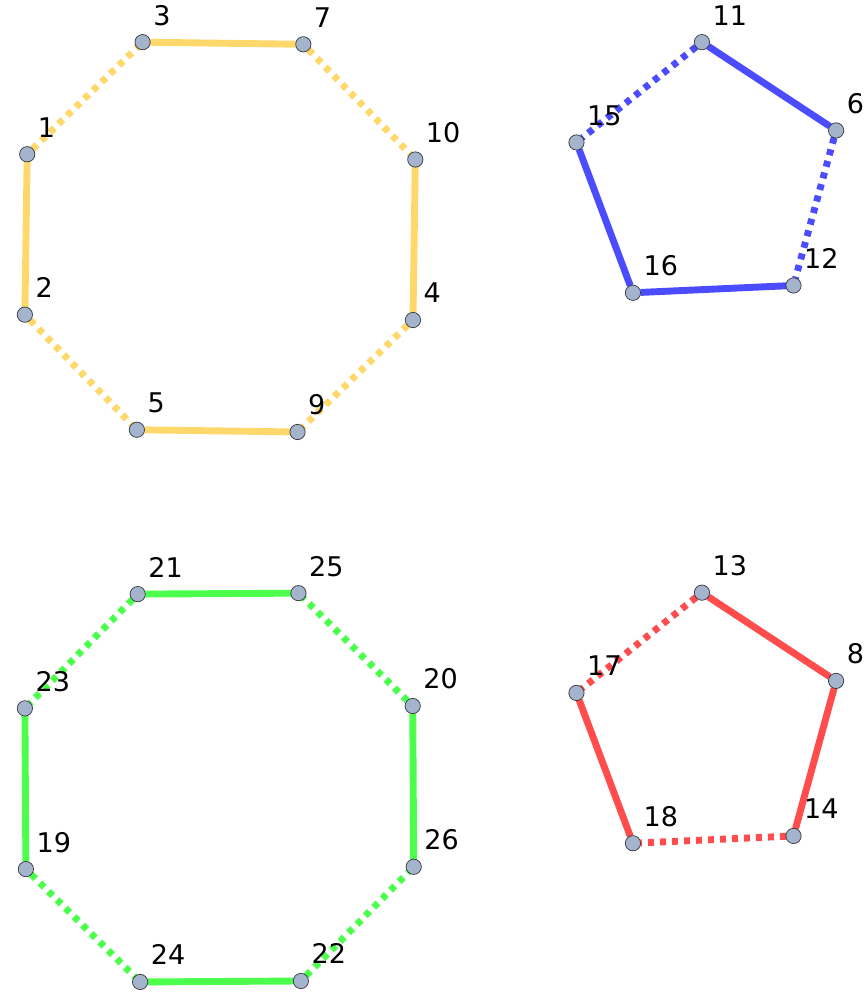}}
\end{center}
\caption{For the same snark $G$ from Figure \ref{Fig04} and the $2$-factor $F$
highlighted in it, this figure shows a maximum matching $M$ in $F$ distinct
from the one in Figure \ref{Fig06a} for which the corresponding graph $H=G-M$
is shown in Figure \ref{Fig08}.}%
\label{Fig07a}%
\end{figure}

\begin{figure}[ph]
\begin{center}
\raisebox{-0.9\height}{\includegraphics[scale=0.6]{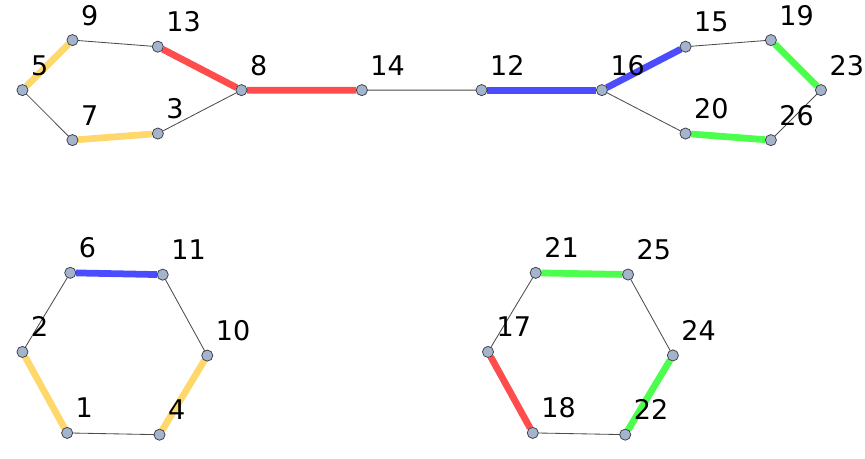}}
\end{center}
\caption{For the snark $G$ from Figure \ref{Fig04}, and the maximum matching
$M$ in $F$ from Figure \ref{Fig07a}, this figure shows the corresponding graph
$H=G-M$. Notice that the main component of $H$ is a kayak-paddle graph such
that the path $P$ connecting the pair of $3$-vertices of $H$ is an $F$-path.
Hence, $\mathcal{P}=\{P\}$ is an $F$-matching of $H$ and the $F$-complement
$\mathcal{L}$ of $\mathcal{P}$ is not $3$-even since it contains a pair of
$3$-odd loops.}%
\label{Fig08}%
\end{figure}

Take as an example the snark $G$ from Figure \ref{Fig04} where the maximum
matching $M$ in $F$ is chosen as in Figure \ref{Fig05a}. The corresponding
graph $H=G-M$ is shown in Figure \ref{Fig06a}. Notice that the $r_{1}=14$ and
$r_{2}=11$ is the pair of $3$-vertices of $H,$ the edge incident to $r_{2}=11$
on the path $P$ connecting $r_{1}$ and $r_{2}$ does belong to $F,$ but the
edge of $P$ incident to $r_{1}=14,$ does not belong to $F.$ Hence, $H$ does
not have an $F$-matching, so this choice of $M$ has the problem P1). In the
next section we provide a sufficient condition for the existence of a maximum
matching $M$ in a $2$-factor of a general snark $G$ such that $H=G-M$ has an
$F$-matching.

\paragraph{Oddness two snarks and the choice of $M$ with the problem P2).}

Let us again consider an oddness two snark $G$ and a $2$-factor $F$ in $G$
with precisely two odd cycles. Here, we show that a maximum matching $M$ in
$F$ can be chosen so that the main component of $H=G-M$ is a kayak-paddle
graph which has an $F$-matching, but the corresponding $F$-complement is not
$3$-even, i.e. which has the problem P2).

An example of such a snark $G$ is again the snark from Figure \ref{Fig04}, but
this time we choose a maximum matching $M$ in $F$ as in Figure \ref{Fig07a}.
The corresponding graph $H=G-M$ is shown in Figure \ref{Fig08} and it is
obviously a kayak-paddle graph. Notice that the only path $P$ of $H$
connecting the pair of $3$-vertices $r_{1}=8$ and $r_{2}=16$ is an $F$-path,
hence $\mathcal{P}=\{P\}$ is an $F$-matching, but its $F$-complement
$\mathcal{L}$ is not $3$-even. Namely, the loop $L_{1}^{o}$ of $\mathcal{L}$
induced by vertices $\{3,5,7,8,9,13\}$ contains precisely one $3$-vertex of
$H$ and that is $r_{1}=8,$ and the loop $L_{2}^{o}$ of $\mathcal{L}$ induced
by vertices $\{15,16,19,20,23,26\}$ also contains precisely one $3$-vertex of
$H$ and that is $r_{2}=16.$

In order to apply Theorem \ref{Lemma_karakterizacija} in such cases, we wish
to introduce additional $3$-vertices in $H$ which will change the $3$-parity
of the $F$-complement $\mathcal{L}$. This is done by a modification of the
matching $M,$ and we discuss this modification in the next section, not only
for oddness two snarks, but for all snarks.

\section{A sufficient condition for the existence of a proper $\mathbb{Z}%
_{4}\times\mathbb{Z}_{2}$-coloring}

In this section we first provide a sufficient condition for the existence of a
maximum matching $M$ in a $2$-factor $F$ of $G$ such that $H=G-M$ has an
$F$-matching, i.e. a maximum matching which does not have the problem P1).
Moreover, an $F$-matching is \emph{simple} if for every $F$-path it holds that
only edges incident to its end-vertices belong to odd length cycles of $F.$
The sufficient condition we introduce assures the existence of a simple $F$-matching.

Let $F$ be a $2$-factor in $G$ and let $F_{\mathrm{odd}}$ (resp.
$F_{\mathrm{even}}$) be a subgraph of $F$ induced by all odd (resp. even)
length cycles of $F.$ Let $M_{\mathrm{even}}$ be a perfect matching in
$F_{\mathrm{even}}$ and let $H_{\mathrm{odd}}=G-M_{\mathrm{even}}.$ Notice
that $F_{\mathrm{odd}}$ is a subgraph of $H_{\mathrm{odd}},$ all vertices of
$F_{\mathrm{odd}}$ are $3$-vertices in $H_{\mathrm{odd}}$ and all other
vertices of $H_{\mathrm{odd}}$ are $2$-vertices. The graph $K_{\mathrm{odd}}$
obtained from $H_{\mathrm{odd}}$ by contracting all edges of $F_{\mathrm{odd}%
}$ and suppressing all $2$-vertices is called an \emph{odd-cycle-incidence} graph.

\begin{proposition}
\label{Tm_proplemP1}Let $G$ be a snark and $F$ a $2$-factor of $G.$ If there
exists a perfect matching $M_{\mathrm{even}}$ in $F_{\mathrm{even}}$ such that
the corresponding $K_{\mathrm{odd}}$ has a perfect matching, then there exists
a maximum matching $M$ in $F$ such that $H=G-M$ has a simple $F$-matching.
\end{proposition}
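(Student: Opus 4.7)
The plan is to use the perfect matchings $M_{\mathrm{even}}$ and $M^*$ to construct $M$ explicitly and to exhibit a simple $F$-matching of $H = G - M$ consisting of one path per edge of $M^*$. The guiding idea is that each edge of $M^*$ comes from a path in $H_{\mathrm{odd}}$ connecting two $F_{\mathrm{odd}}$-vertices whose interior lies in $F_{\mathrm{even}}$, and prepending/appending a single odd-cycle $F$-edge to it turns it into an $F$-path of $H$.

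For every edge of $M^*$ joining the super-vertices of two odd cycles $C$ and $C'$, let $\pi_e$ be the underlying path in $H_{\mathrm{odd}}$. By the definition of $K_{\mathrm{odd}}$ via contraction of $F_{\mathrm{odd}}$ and suppression of $2$-vertices, $\pi_e$ has odd length, starts with a non-$F$ edge at a uniquely determined vertex $x_C \in V(C)$, alternates non-$F$/$F$ through $F_{\mathrm{even}}$-vertices, and ends with a non-$F$ edge at a vertex $x_{C'} \in V(C')$. For every odd cycle $C$, I pick $u_C$ to be an arbitrary neighbor of $x_C$ on $C$, let $M_C$ be the unique maximum matching of $C$ avoiding $u_C$, and set $M = M_{\mathrm{even}} \cup \bigcup_C M_C$. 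This is a maximum matching of $F$ whose uncovered vertices are exactly the $u_C$'s. The candidate simple $F$-matching of $H$ is then $\mathcal{P} = \{P_e : e \in M^*\}$, where $P_e$ is the concatenation of $u_C x_C$, $\pi_e$, and $x_{C'} u_{C'}$.

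I would then verify that $\mathcal{P}$ is a simple $F$-matching of $H$. The end-edges $u_C x_C \in E(C)$ and $x_{C'} u_{C'} \in E(C')$ survive in $H$ because $u_C, u_{C'}$ are uncovered by $M$; the other edges of $P_e$ are edges of $H_{\mathrm{odd}}$ disjoint from $\bigcup_C M_C$, so they also lie in $H$. The endpoints $u_C, u_{C'}$ are the $3$-vertices of $H$ on $P_e$, while $x_C, x_{C'}$ and the interior of $\pi_e$ are $2$-vertices, so each $P_e$ is an $F$-path. Vertex-disjointness of the $P_e$'s follows from the facts that every $2$-vertex of $H_{\mathrm{odd}}$ lies on a unique chain between two $3$-vertices and that each odd cycle is in a unique edge of $M^*$. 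Simplicity is immediate, as the end-edges of $P_e$ lie in the odd cycles $C, C'$, while each interior $F$-edge of $P_e$ belongs to $E(F_{\mathrm{even}}) \setminus M_{\mathrm{even}}$ and hence to an even cycle of $F$.

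The subtle point is to verify that the above concatenation really is a path in $H$: at the transition vertex $x_C$ (and symmetrically at $x_{C'}$), the edges $u_C x_C$ and the initial non-$F$ edge of $\pi_e$ must be precisely the two edges of $H$ incident to $x_C$. This is exactly why $u_C$ is chosen as a neighbor of $x_C$ on $C$: doing so places $u_C x_C$ outside $M_C$ and forces the opposite $F$-edge at $x_C$ on $C$ into $M_C$, so that this opposite edge is removed in $H$. Hence $x_C$ has degree $2$ in $H$ with exactly the two edges required to concatenate $u_C x_C$ with $\pi_e$, and the analogous argument handles $x_{C'}$.
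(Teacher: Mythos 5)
Your construction is exactly the one in the paper: choose $u_C$ as a neighbour on $C$ of the attachment vertex $x_C$ of the path realizing the matching edge of $K_{\mathrm{odd}}$, take in each odd cycle the maximum matching missing $u_C$, set $M=M_{\mathrm{even}}\cup\bigcup_C M_C$, and extend each realizing path by the two edges $u_Cx_C$ and $x_{C'}u_{C'}$ to obtain the simple $F$-matching. The argument is correct, and your verification at the transition vertices $x_C$ is in fact slightly more careful than the paper's.
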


\begin{proof}
Notice that an odd-cycle-incidence graph $K_{\mathrm{odd}}$ has even number of
vertices, since $F$ has an even number of odd length cycles. Denote by
$C_{1},\ldots,C_{2k}$ all odd length cycles of $F,$ we may assume the same
labels for the corresponding vertices of $K_{\mathrm{odd}}.$ Let
$M_{K_{\mathrm{odd}}}=\{e_{1},\ldots,e_{k}\}$ be a perfect matching in
$K_{\mathrm{odd}},$ and without loss of generality we may assume that
$e_{i}=C_{2i-1}C_{2i}.$ Notice that every edge $e_{i}$ of $M_{K}$ corresponds
to a path $P_{i}$ in $H_{\mathrm{odd}}$ connecting a pair of vertices
$c_{2i-1}\in C_{2i-1}$ and $c_{2i}\in C_{2i}$ such that all interior vertices
of $P_{i}$ are $2$-vertices in $H_{\mathrm{odd}}.$

Since $M_{K}$ is a perfect matching, this implies that every odd cycle $C_{i}$
of $F$ contains precisely one vertex $c_{i}$ which is an end-vertex of a path
which corresponds to an edge of $M_{K_{\mathrm{odd}}}.$ Let $u_{i}$ be a
neighbor of $c_{i}$ on $C_{i},$ for every $i=1,\ldots,2k.$ Further, let
$M_{\mathrm{odd}}$ be a maximum matching in $F_{\mathrm{odd}}$ which does not
cover vertices $u_{i},$ for every $i=1,\ldots,2k.$ Obviously,
$M=M_{\mathrm{odd}}\cup M_{\mathrm{even}}$ is a maximum matching in $F$ and
let $H=G-M$. Notice that vertices $u_{i}$, for $i=1,\ldots,2k,$ are the only
$3$-vertices of $H.$ Let $P_{i}^{\prime}=u_{2i-1}P_{i}u_{2i}$ be the path in
$H$ which connects vertices $u_{2i-1}$ and $u_{2i}$ which is obtained from
$P_{i}$ by adding edges $u_{2i-1}c_{2i-1}$ and $u_{2i}c_{2i}$ to it.
Obviously, $P_{i}^{\prime}$ is an $F$-path in $H$ connecting the pair
$u_{2i-1}$ and $u_{2i}$ of $3$-vertices of $H.$ Moreover, besides the edges
incident to its end-vertices, $P_{i}^{\prime}$ does not contain edges of odd
cycles from $F$ since $P_{i}$ does not contain them. Hence, $\mathcal{P=\{P}%
_{i}^{\prime}:i=1,\ldots,k\}$ is the desired $F$-matching in $H$.
\end{proof}

Notice that for a $2$-factor $F$ of an oddness two snark $G$ which has
precisely two odd cycles, $C_{1}$ and $C_{2},$ a graph $K_{\mathrm{odd}}$
corresponding to any perfect matching $M_{\mathrm{even}}$ has precisely two
vertices $C_{1}$ and $C_{2}$. Also, $K_{\mathrm{odd}}$ must contain the edge
$e=C_{1}C_{2},$ since $C_{1}$ and $C_{2}$ are odd length cycles in
$H_{\mathrm{odd}}.$ Thus, $M_{K_{\mathrm{odd}}}=\{e\}$ is a perfect matching
in $K_{\mathrm{odd}}$, so the above proposition yields the following corollary.

\begin{corollary}
\label{Cor_Foddness2}Let $G$ be an oddness two snark and $F$ a $2$-factor in
$G$ with precisely two odd cycles. Then there exists a maximum matching $M$ in
$F,$ such that $H=G-F$ has a simple $F$-matching.
\end{corollary}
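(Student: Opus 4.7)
The plan is to derive this corollary directly from Proposition \ref{Tm_proplemP1}, so it suffices to produce a perfect matching $M_{\mathrm{even}}$ in $F_{\mathrm{even}}$ for which the associated odd-cycle-incidence graph $K_{\mathrm{odd}}$ has a perfect matching. First I would note that since $F_{\mathrm{even}}$ is a vertex-disjoint union of even cycles, a perfect matching $M_{\mathrm{even}}$ in $F_{\mathrm{even}}$ always exists; pick any such matching. By definition, $K_{\mathrm{odd}}$ then has exactly two vertices, corresponding to the two odd cycles $C_1$ and $C_2$ of $F$, so a perfect matching of $K_{\mathrm{odd}}$ amounts to a single edge joining $C_1$ and $C_2$.

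Next I would argue that such an edge must exist, which is the only real content. Consider $H_{\mathrm{odd}}=G-M_{\mathrm{even}}$, in which every vertex of $C_1 \cup C_2$ has degree $3$ and every other vertex has degree $2$. An edge of $K_{\mathrm{odd}}$ is obtained from a path in $H_{\mathrm{odd}}$ starting and ending on $V(C_1) \cup V(C_2)$ with all interior vertices of degree $2$; such a path is forced once one chooses a starting $3$-vertex and an outgoing non-$F_{\mathrm{odd}}$ edge at it, since interior $2$-vertices determine the continuation uniquely. Hence the number of path-ends at $C_i$ equals $|V(C_i)|$, which is odd. A loop of $K_{\mathrm{odd}}$ at $C_i$ consumes two of these ends, so not all path-ends at $C_1$ can be consumed by loops at $C_1$, forcing at least one path to terminate at $C_2$. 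This produces the desired edge $e = C_1 C_2$ in $K_{\mathrm{odd}}$, and $M_{K_{\mathrm{odd}}} = \{e\}$ is a perfect matching.

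Finally, I would invoke Proposition \ref{Tm_proplemP1} with this $M_{\mathrm{even}}$ and $M_{K_{\mathrm{odd}}}$ to obtain a maximum matching $M$ in $F$ such that $H = G - M$ admits a simple $F$-matching, which is exactly the statement of the corollary. The main (in fact the only) obstacle is the parity argument ensuring that the cross-edge $C_1 C_2$ is present in $K_{\mathrm{odd}}$, but this follows immediately from the fact that $|V(C_i)|$ is odd while loops pair ends in twos.
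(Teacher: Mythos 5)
Your proof is correct and follows essentially the same route as the paper: choose any perfect matching $M_{\mathrm{even}}$ in $F_{\mathrm{even}}$, observe that $K_{\mathrm{odd}}$ has exactly the two vertices $C_1$ and $C_2$, show the edge $C_1C_2$ is present, and invoke Proposition \ref{Tm_proplemP1}. The only difference is that you spell out the parity argument for the existence of the edge $C_1C_2$, which the paper compresses into the single remark that $C_1$ and $C_2$ are odd cycles in $H_{\mathrm{odd}}$.
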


Before we proceed, let us give several comments on Proposition
\ref{Tm_proplemP1}. First, for a snark $G$ and a $2$-factor $F$ in $G$ let $K$
be the graph obtained from $G$ by contracting the edges of $F.$ Notice that a
perfect matching in $K_{\mathrm{odd}}$ induces a $T$-join in $K$ in which
every odd degree equals $1.$ The converse does not seem to be the case.
Further, the condition of Proposition \ref{Tm_proplemP1} seems to be far from
necessary. Namely, in Proposition \ref{Tm_proplemP1} we construct a maximal
matching, i.e. a matching such that $H$ has precisely one $3$-vertex from each
odd cycle of $F.$ But essentially, more than one $3$-vertex can be created out
of every odd cycle of $F,$ they just must be conveniently placed on the cycle
of $F,$ so that there exists a matching on the cycle such that they are
connected to $3$-vertices created on other odd cycle and possibly even the
even cycles of $F.$. In other words, one could search for a $T$-join in
$K_{\mathrm{odd}}$ with additional properties.

\smallskip

\begin{figure}[h]
\begin{center}
\raisebox{-0.9\height}{\includegraphics[scale=0.6]{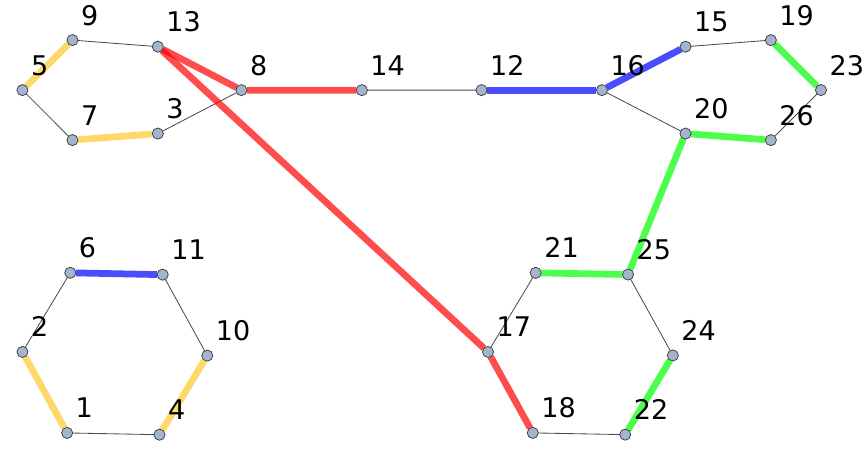}}
\end{center}
\caption{The figure shows the corrected graph $H^{\ast}$ of a graph $H$ from
Figure \ref{Fig08}. Notice that loops of $H$ are also loops of $H^{\ast}$ and
the two additional edges are introduced in $H,$ each of them is an additional
$F$-path so there exists an $F$-matching $\mathcal{P}^{\ast}$ in $H^{\ast}$
whose $F$-complement $\mathcal{L}^{\ast}$ is $3$-even.}%
\label{Fig11a}%
\end{figure}

Next, we provide a method of modifying a matching $M$ in a $2$-factor $F$ of a
general snark $G,$ in the case when $H=G-M$ does have an $F$-matching, but the
$F$-complement is not $3$-even, i.e. in the case of the problem P2). We will
motivate the method on the example of oddness two snark $G$ from Figure
\ref{Fig04} and the maximum matching $M$ in the $2$-factor $F$ from Figure
\ref{Fig07a}. For this choice of $M,$ the graph $H=G-M$ is shown in Figure
\ref{Fig08}. Notice again that the path $P$ connecting the pair of
$3$-vertices $r_{1}=8$ and $r_{2}=16$ is an $F$-path, hence $\mathcal{P}%
=\{P\}$ is an $F$-matching, but its $F$-complement $\mathcal{L}$ is not
$3$-even since it contains two loops with precisely one $3$-vertex of $H.$

In order to change the $3$-parity of the loop $L_{1}^{o}$ of $\mathcal{L}$
induced by vertices $\{3,5,7,8,9,13\}$ we need to create one more $3$-vertex
on that loop. This is done by modifying the maximum matching $M$ from Figure
\ref{Fig07a} into $M^{\ast},$ so that an additional vertex of $L_{1}^{o}$,
beside $r_{1}=8,$ is not covered by $M^{\ast}$ and thus becomes a $3$-vertex
in $H^{\ast}=G-M^{\ast}.$ Say we choose vertex $13$ to become additional
$3$-vertex, this implies we have to remove an edge $(13,17)$ from $M,$ but
then the vertex $17$ also becomes a $3$-vertex, see Figure \ref{Fig11a}. Now
the loop $L_{1}^{o}$ becomes $3$-even, since it contains the two $3$-vertices
$8$ and $13,$ but the loop induced by $\{17,18,21,22,24,25\}$ becomes $3$-odd,
since now it contains precisely one $3$-vertex and that is $17.$

Next, we wish to change the parity of this new $3$-odd loop, and we do that by
creating another $3$-vertex on it, say $25$. This is done by additionally
removing the edge $(25,20)$ from $M,$ which implies the vertex $20$ also
becomes a $3$-vertex beside the vertex $25,$ see again Figure \ref{Fig11a}.
Now the loop induced by $\{17,18,21,22,24,25\}$ becomes $3$-even, since it
contains precisely two $3$-vertices, these are $17$ and $25.$ Also, the other
additional $3$-vertex $20$ belongs to the loop $L_{2}^{o}$ which is $3$-odd
from the start and now it also becomes $3$-even. Hence, for the matching
$M^{\ast}=M\backslash\{(13,17),(25,20)\}$ the corresponding $F$-complement is
$3$-even.

\begin{figure}[h]
\begin{center}
\raisebox{-0.9\height}{\includegraphics[scale=0.12]{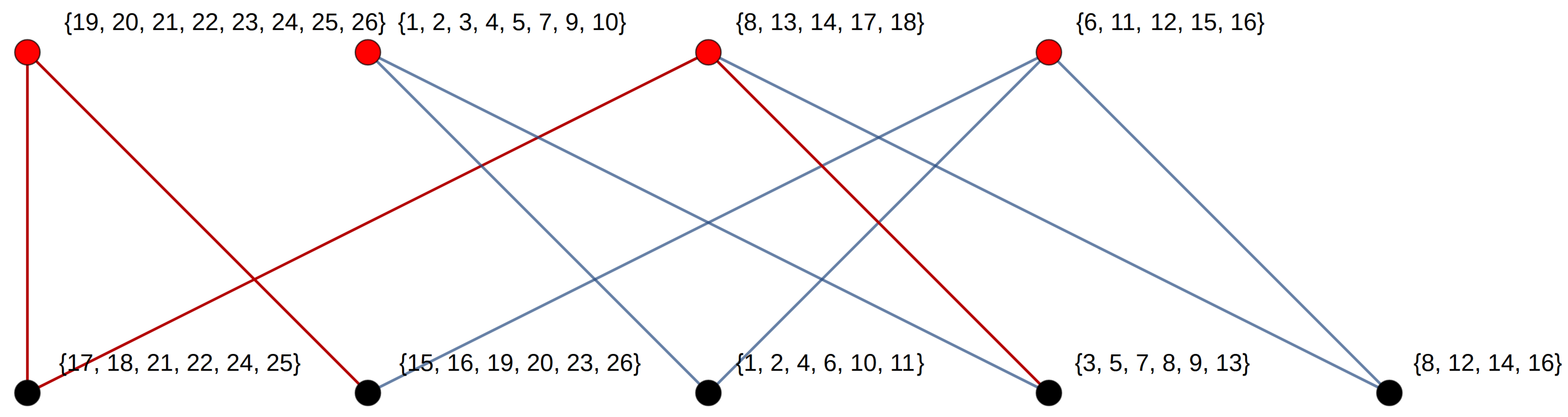}}
\end{center}
\caption{The figure shows the loop-cycle incidence graph $B$ for the snark $G$
as in Figure \ref{Fig04}, $2$-factor $F$ and maximal matching $M$ in $F$ as in
Figure \ref{Fig07a} and the graph $H$ as in Figure \ref{Fig08}. The upper row
of vertices belongs to $\mathcal{C}^{\ast}$ and the lower row of vertices
belongs to $\mathcal{L\cup\mathcal{P}}$, where vertices are labeled by the
vertex set of the corresponding cycle/path from $\mathcal{C}^{\ast}$ or a
loop/path of $\mathcal{L\cup\mathcal{P}}$. A path $P_{B}$ connecting the pair
of $3$-odd loops of $H$ is highlighted in red in the graph $B.$}%
\label{Fig10a}%
\end{figure}

Notice that in each step we created two additional $3$-vertices which both
belong to a same cycle of $F.$ In our example, these two vertices belonged to
two distinct loops in $H$ in both steps, hence we created a sequence of loops
$L_{0},L_{1},L_{2}$, where $L_{0}=L_{1}^{o}$ and $L_{2}=L_{2}^{o},$ such that
each two consecutive loops from the sequence contain a vertex of a same cycle
from $F.$ On the first and the last loop of the sequence one additional
$3$-vertex is created which changed their $3$-parity, while for the interior
loop of the sequence two additional $3$-vertices are created which preserves
the $3$-parity of the loop. All this indicates that we need to introduce the
notion of a loop-cycle-incidence graph.

\paragraph{Loop-cycle-incidence graph.}

Let $\mathcal{C}^{\ast}$ denote a set of all connected components of
$F-E_{\mathcal{P}}$ where $E_{\mathcal{P}}$ denotes the set of all end-edges
of paths from $\mathcal{P}$. Notice that $\mathcal{C}^{\ast}$ may contain only
cycles and paths. We define a \emph{loop-cycle-incidence graph} $B$ so that
$V(B)=\mathcal{C}^{\ast}\mathcal{\cup\mathcal{L\cup\mathcal{P}}}$ and a vertex
$L\in\mathcal{\mathcal{L\cup\mathcal{P}}}$ is adjacent to a vertex
$C\in\mathcal{C}^{\ast}$ if and only if $L$ and $C$ share an edge in $G$.
Notice that $B$ is a bipartite graph by definition and its bipartition sets
are $\mathcal{C}^{\ast}$ and $\mathcal{\mathcal{L\cup\mathcal{P}}}$.

For the kayak-paddle graph $H$ from Figure \ref{Fig08} and the corresponding
$2$-factor $F$ from Figure \ref{Fig07a}, the corresponding loop-cycle
incidence graph $B$ is illustrated by Figure \ref{Fig10a}. The modification of
the graph $H$ from Figure \ref{Fig08} into the graph $H^{\prime}$ from Figure
\ref{Fig11a}, that we described in previous text is done along the path
$P_{B}$ in $B$ (highlighted in red in Figure \ref{Fig10a}) which connects the
pair of $3$-odd loops $L_{1}^{o}$ and $L_{2}^{o}$ of $H.$ This is a very
simple example, and in the next theorem we provide a generalization of the
method to all snarks.

\begin{theorem}
\label{Prop_correctionM}Let $G$ be a snark, $F$ a $2$-factor in $G,$ and $M$ a
matching in $F.$ Let $\mathcal{P}$ be an $F$-matching in $H=G-M$, and $B$ the
corresponding loop-cycle incidence graph. If each component of $B-\mathcal{P}$
contains an even number of $3$-odd loops from $\mathcal{L\subseteq}V(B)$, then
$G$ has a proper $\mathbb{Z}_{4}\times\mathbb{Z}_{2}$-coloring.
\end{theorem}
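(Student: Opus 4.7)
By Theorem~\ref{Lemma_karakterizacija}, it will suffice to produce a sub-matching $M^* \subseteq M$ of $F$ such that $H^* = G - M^*$ admits an $F$-matching whose $F$-complement is $3$-even. I plan to obtain $M^*$ from $M$ by removing carefully chosen edges: each removed edge $e \in M$ lies in $F$, so it re-enters $H^*$, turning its two endpoints into new $3$-vertices of $H^*$ and becoming a length-$1$ $F$-path that can be added to $\mathcal{P}$ to form $\mathcal{P}^*$. As an edge set, $\mathcal{L}^* = H^* - E(\mathcal{P}^*)$ then equals $\mathcal{L}$, but each loop's number of $3$-vertices changes by however many removed-edge endpoints land on it. The goal is to select the removed edges so that this change has odd parity on every $3$-odd loop and even parity on every $3$-even loop.

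\textbf{Pairing and routing.} The hypothesis that every component of $B - \mathcal{P}$ contains an even number of $3$-odd loops will allow a $T$-join type argument, with $T$ taken to be the set of $3$-odd loops: I extract a collection of edge-disjoint paths in $B - \mathcal{P}$ whose endpoints are exactly the $3$-odd loops, paired arbitrarily inside each component. Each such path alternates between $\mathcal{L}$- and $\mathcal{C}^*$-vertices and so has the form $L_0\, C_1\, L_1\, C_2 \cdots C_k\, L_k$ with $L_0, L_k$ being the two paired $3$-odd loops.

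\textbf{Choosing the removed edges.} For each interior vertex $C_j$ of such a path I would select an $M$-edge $e_j$ on $C_j$ whose two endpoints are respectively on $L_{j-1}$ and $L_j$. Since $C_j$ is a cycle or path in $F - E_{\mathcal{P}}$, the restriction $M \cap E(C_j)$ is a near-perfect matching of $C_j$, and the non-$M$ edges of $C_j$ are exactly the edges it shares with various loops of $\mathcal{L}$; walking along $C_j$ the vertices partition into short loop-labeled segments joined by $M$-edges. The $B$-adjacency of $C_j$ to both $L_{j-1}$ and $L_j$ forces segments labeled by both loops to appear on $C_j$, so at least one $M$-edge on $C_j$ crosses from an $L_{j-1}$-segment to an $L_j$-segment, and any such edge serves as $e_j$. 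Taking $M^* = M \setminus \{e_j\}$ over all chosen edges and all paths, the parity accounting is: each endpoint loop $L_0$ or $L_k$ gains one new $3$-vertex (from the unique $e_1$ or $e_k$ that meets it), while each interior loop $L_j$ gains two (from $e_j$ and $e_{j+1}$). Hence $\mathcal{L}^*$ is $3$-even and Theorem~\ref{Lemma_karakterizacija} yields the coloring.

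\textbf{Main obstacle.} The delicate point will be the simultaneous, conflict-free choice of the $e_j$'s when loops or cycles $C_j$ reappear on several of the chosen paths: distinct $M$-edges must be selected on each visit, and the total parity contributed to each loop across all paths must still match the loop's required parity. Routing the $T$-join so that each edge of $B-\mathcal{P}$ is used at most once should already force the selected $M$-edges to be distinct, but verifying that the cumulative effect on each loop closes correctly modulo $2$, and that $\mathcal{P}^*$ remains a valid $F$-matching after all removals, is the main technical burden I anticipate.
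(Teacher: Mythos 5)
There is a genuine gap at the step ``Choosing the removed edges.'' You claim that because $C_j$ is adjacent in $B$ to both $L_{j-1}$ and $L_j$, some $M$-edge of $C_j$ must have one endpoint on $L_{j-1}$ and the other on $L_j$. This is false. The edges of $C_j$ alternate between $M$ and $E(F)\setminus M$, and each non-$M$ edge of $C_j$ belongs either to some loop of $\mathcal{L}$ or to the interior of a path of $\mathcal{P}$ (your description omits the latter possibility). An $M$-edge of $C_j$ joins only two \emph{consecutive} non-$M$ edges in the cyclic order along $C_j$, so a direct $M$-edge from $L_{j-1}$ to $L_j$ exists only if an $L_{j-1}$-edge and an $L_j$-edge happen to be consecutive there; in general arbitrarily many segments belonging to other loops and to paths of $\mathcal{P}$ can separate them. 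The paper's proof instead takes the whole alternating $M$-subpath $Q$ of $C_j$ between a chosen $L_{j-1}$-edge and a chosen $L_j$-edge and deletes all of its $M$-edges. This creates the two desired new $3$-vertices at the ends of $Q$, and the interior non-$M$ edges of $Q$ contribute new $3$-vertices in pairs, which is harmless for loops. But when such an interior edge $e$ lies on a path of $\mathcal{P}$, its endpoints would become $3$-vertices interior to an $F$-path, destroying the $F$-matching; the paper repairs this by inserting $e$ into the new matching, so that $M^{\ast}\not\subseteq M$ in general. Your framework, which insists on $M^{\ast}\subseteq M$ and on single-edge removals, therefore cannot be carried out as stated.

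A second, smaller issue is the conflict you flag yourself: when several of the chosen $B$-paths pass through the same $C$, edge-disjointness of the paths in $B$ is not enough. The alternating subpaths $Q$ on $C$ must be pairwise disjoint, which fails if two $B$-paths are \emph{interlaced} on $C$ (their chosen loop-edges separate each other along $C$). The paper resolves this by explicitly rewiring the $B$-paths on each such $C$ so that the selected edge pairs are consecutive (its Claim B), after first replacing the paths by their symmetric difference to make them edge-disjoint. Your proposal names this as ``the main technical burden'' but offers no mechanism for it, and the $T$-join routing you suggest does not by itself prevent interlacing.
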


\begin{proof}
Assume that $L_{1},L_{2},\ldots,L_{2q}$ are all $3$-odd loops in $\mathcal{L}$
denoted so that $L_{2i-1}$ and $L_{2i}$ belong to a same connected component
of $B-\mathcal{P}$, for $1\leq i\leq q$. This implies that there exists a path
$P_{i}$ in $B$ connecting $L_{2i-1}$ and $L_{2i}$ which does not contain
vertices of $\mathcal{P}\subseteq V(B).$

\medskip

\noindent\textbf{Claim A.}\emph{ We may assume that paths }$P_{i},$\emph{ for
}$i=1,\ldots,q,$\emph{ are pairwise edge disjoint.}

\smallskip

\noindent If paths $P_{i}$ are not pairwise edge disjoint, let $P$ be the
subgraph of $B$ induced by $\cup_{i=1}^{q}E(P_{i}),$ and let $P^{\prime}$ be
the subgraph of $P$ obtained from $P$ by removing all edges which belong to
even number of paths $P_{i}$ (i.e. $P^{\prime}$ is the symmetric difference of
all paths $P_{i}$). Then $P^{\prime}$ is an union of edge disjoint paths
$P_{i}^{\prime}$, for $1\leq i\leq q,$ each of which connects a pair of
$3$-odd loops of $\mathcal{L}$ and does not contain vertices of $\mathcal{P}%
\subseteq V(B).$ This establishes the claim.

\medskip\begin{figure}[h]
\begin{center}%
\begin{tabular}
[c]{ll}%
a) & \raisebox{-0.9\height}{\includegraphics[scale=0.53]{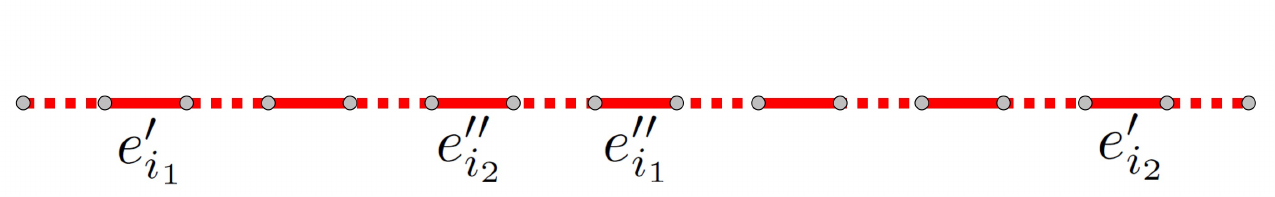}}\\
b) & \raisebox{-0.9\height}{\includegraphics[scale=0.53]{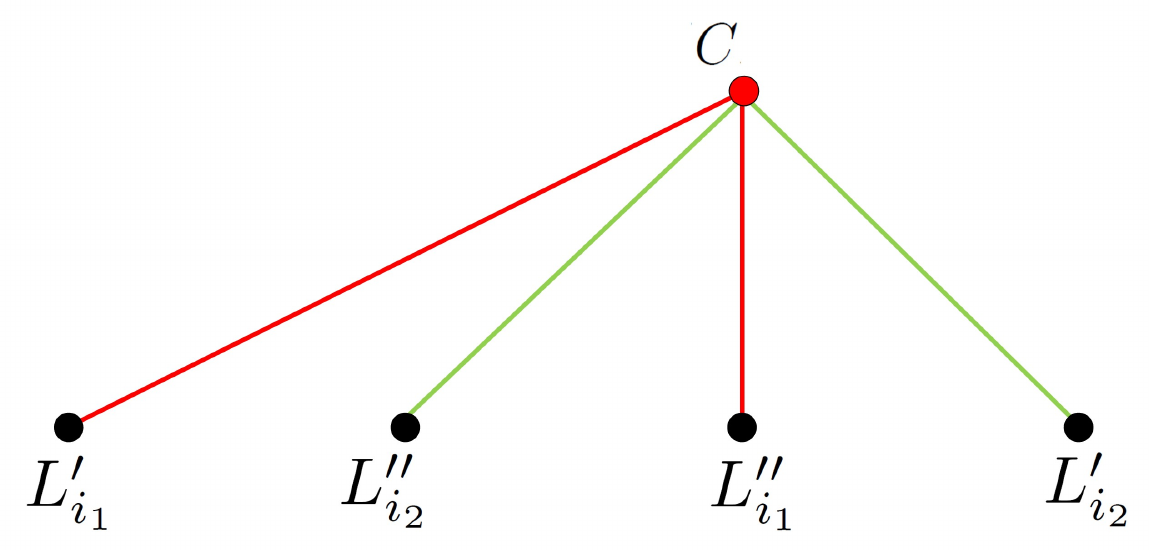}}
\end{tabular}
\end{center}
\caption{The figure shows a path $C\in\mathcal{C}_{P}^{\ast}$, where the edges
of $C$ which belong to $M$ are dashed. There are two paths in $B$ which go
through $C,$ the paths $P_{i_{1}}$ and $P_{i_{2}}.$ Hence, a) the edges
$e_{i_{1}}^{\prime}$ and $e_{i_{1}}^{\prime\prime}$ of $P_{i_{1}}$ and also
$e_{i_{2}}^{\prime}$ and $e_{i_{2}}^{\prime\prime}$ of $P_{i_{2}}$ are shown
on $C.$ In b) the corresponding segment $L_{i_{1}}^{\prime}CL_{i_{1}}%
^{\prime\prime}$ of $P_{i_{1}}$ is shown in red and the corresponding segment
$L_{i_{2}}^{\prime}CL_{i_{2}}^{\prime\prime}$ of $P_{i_{2}}$ is shown in
green.}%
\label{Figure06}%
\end{figure}

Let $\mathcal{C}_{P}^{\ast}\subseteq\mathcal{C}^{\ast}$ denote the set of all
cycles/paths of $\mathcal{C}^{\ast}$ which belong to at least one path
$P_{i}.$ Fix a cycle/path $C$ from $\mathcal{C}_{P}^{\ast}.$ Let $i_{1}\leq
i_{2}\leq\cdots\leq i_{r}$ be all indices such that $C$ is contained on
$P_{i_{k}}$. Let $L_{i_{k}}^{\prime}$ and $L_{i_{k}}^{\prime\prime}$ be the
pair of loops from a path $P_{i_{k}}$ incident to $C$ for every $1\leq k\leq
r.$ Since by Claim A the paths $P_{i_{k}}$ in $B$ are pairwise edge disjoint,
the loops of the set $\mathcal{L}_{C}=\{L_{i_{k}}^{\prime},L_{i_{k}}%
^{\prime\prime}:1\leq k\leq r\}$ are pairwise distinct, which implies they are
also pairwise edge disjoint. Denote by $e_{i_{k}}^{\prime}$ (resp. $e_{i_{k}%
}^{\prime\prime}$) an edge shared by $C$ and $L_{i_{k}}^{\prime}$ (resp.
$L_{i_{k}}^{\prime\prime}$), as illustrated by Figure \ref{Figure06}.a). Since
the loops of $\mathcal{L}_{C}$ are pairwise edge disjoint, all edges from the
set $E_{C}=\{e_{i_{k}}^{\prime},e_{i_{k}}^{\prime\prime}:1\leq k\leq r\}$ are
pairwise distinct, hence $\left\vert E_{C}\right\vert =2r.$

We say that two paths $P_{i_{k}}$ and $P_{i_{l}}$ are \emph{interlaced} on
$C\in\mathcal{C}_{P}^{\ast},$ if edges $e_{i_{k}}^{\prime}$ and $e_{i_{k}%
}^{\prime\prime}$ belong to distinct connected components of $C-\{e_{i_{j}%
}^{\prime},e_{i_{j}}^{\prime\prime}\}$ or vice versa. An example of two
interlaced paths $P_{i_{1}}$ and $P_{i_{2}}$ is shown in Figure \ref{Figure06}%
.a), and the corresponding edges of paths in $B$ are shown in Figure
\ref{Figure06}.b). In the following claim we show that paths $P_{i_{k}},$ for
$k=1,\ldots,r,$ can be rewired so that no two paths are interlaced on
$C$.\begin{figure}[h]
\begin{center}%
\begin{tabular}
[c]{ll}%
a) & \raisebox{-0.9\height}{\includegraphics[scale=0.53]{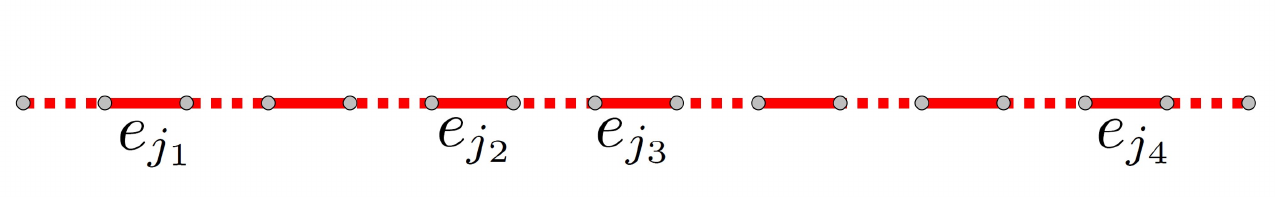}}\\
b) & \raisebox{-0.9\height}{\includegraphics[scale=0.53]{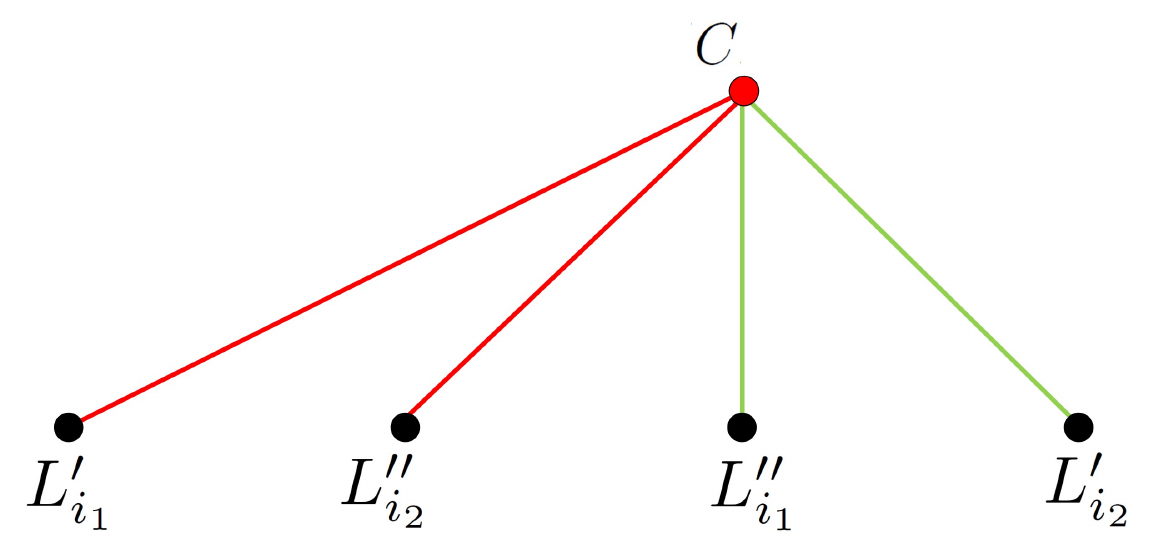}}
\end{tabular}
\end{center}
\caption{For the interlaced paths $P_{i_{1}}$ and $P_{i_{2}}$ from Figure
\ref{Figure06} and the corresponding edges $e_{i_{1}}^{\prime},$ $e_{i_{1}%
}^{\prime\prime},$ $e_{i_{2}}^{\prime}$ and $e_{i_{2}}^{\prime\prime},$ this
figure shows a) the relabeling of edges into $e_{j_{1}},$ $e_{j_{2}}$,
$e_{j_{3}}$ and $e_{j_{4}}$ according to the order od edges on $C$ and b)
rewiring of paths $P_{i_{1}}$ and $P_{i_{2}}$ into paths $P_{i_{1}}^{\prime}$
(in red) and $P_{i_{2}}^{\prime}$ (in green) which are not interlaced.}%
\label{Figure07}%
\end{figure}

\medskip

\noindent\textbf{Claim B.}\emph{ We may assume that paths }$P_{i_{k}},$\emph{
for }$k=1,\ldots,r,$\emph{ are pairwise not interlaced.}

\smallskip

\noindent Assume that vertices of the cycle/path $C\in\mathcal{C}_{P}^{\ast}$
are denoted so that $C=u_{0}u_{1}\cdots u_{g-1}u_{g}$ (here, $u_{g}=u_{0}$ if
$C$ is a cycle). Also, assume that edges of $C$ are denoted by $e_{j}%
=u_{j-1}u_{j}$ for $j=1,\ldots,g.$ Let $j_{1}<j_{2}<\cdots<j_{2r}$ be the
indices of the edges on $C$ such that $e_{j_{k}}\in E_{C}$ for $k=1,\ldots
,2r,$ i.e. for the edges of $E_{C}$ we take labels from $C$ so that they are
labeled according to their order on $C.$ This is illustrated by Figure
\ref{Figure07}.a). Let us now consider the first pair of edges $e_{j_{1}}$ and
$e_{j_{2}}$ from $E_{C}.$ According to the old notation we may assume that,
say, $e_{j_{1}}=e_{i_{k}}^{\prime}$ belongs to the loop $L_{i_{k}}^{\prime}$
of the path $P_{i_{k}}$ and $e_{j_{2}}=e_{i_{q}}^{\prime\prime}$ belongs to
the loop $L_{i_{q}}^{\prime\prime}$ of the path $P_{i_{q}}$. Now, we can
define a new path $P_{i_{1}}^{\prime}$ in $B$ as the one obtained from
$P_{i_{k}}$ and $P_{i_{q}}$ by making a connection $L_{i_{k}}^{\prime
}CL_{i_{q}}^{\prime\prime},$ see Figure \ref{Figure07}.b). Repeating the
procedure for the next pair of edges $e_{j_{3}}$ and $e_{j_{4}}$ from $E_{C},$
and so on until the last pair $e_{j_{2r-1}}$ and $e_{j_{2r}},$ yields paths
$P_{i_{k}}^{\prime}$ for $k=1,\ldots,r$ which are not interlaced, so the claim
is established.

\medskip

\begin{figure}[h]
\begin{center}%
\begin{tabular}
[c]{ll}%
a) & \raisebox{-0.9\height}{\includegraphics[scale=0.53]{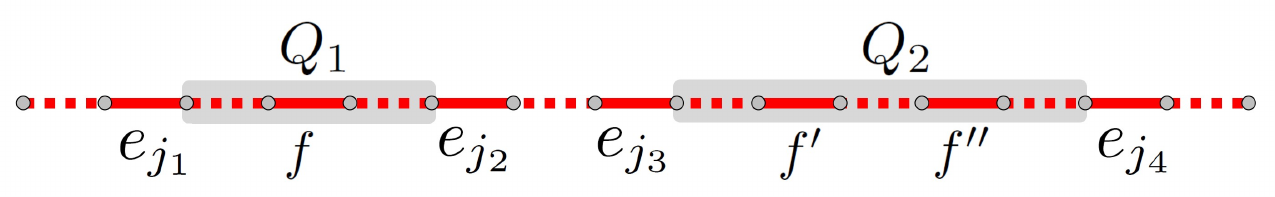}}\\
b) & \raisebox{-0.9\height}{\includegraphics[scale=0.53]{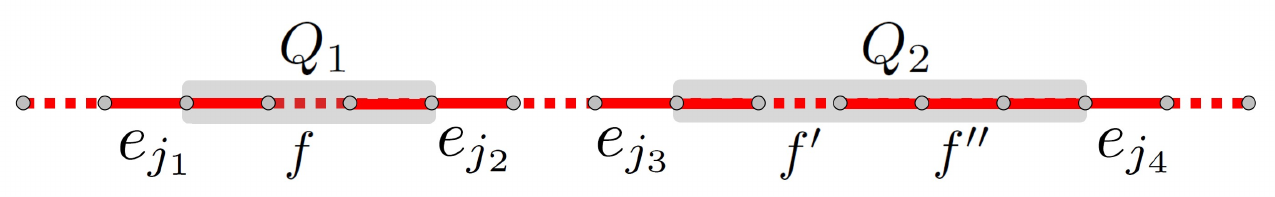}}
\end{tabular}
\end{center}
\caption{The figure shows a path $C\in\mathcal{C}_{P}^{\ast}$, where the edges
of $C$ which belong to $M$ are dashed. There are two non-interlaced paths in
$B$ which go through $C,$ the paths $P_{i_{1}}$ and $P_{i_{2}}.$ Hence, a) the
corresponding four edges $e_{j_{1}}$, $e_{j_{2}},$ $e_{j_{3}}$ and $e_{j_{4}}$
are shown on $C$ and the corresponding paths $Q_{1}$ and $Q_{2}$ connecting
them are shaded. In b) the modification of $M$ into $M^{\ast}$ along paths
$Q_{1}$ and $Q_{2}$ is shown, paths $Q_{i}$ contain edges $f,f^{\prime
},f^{\prime\prime}\in E(F)\backslash M$ where we assume that $f,f^{\prime}\in
E(\mathcal{P})$ and $f^{\prime\prime}\in E(\mathcal{L}).$}%
\label{Figure10}%
\end{figure}

Let us now consider the path $P_{i}$ connecting a pair of $3$-odd loops
$L_{2i-1}$ and $L_{2i}$ for some $i\in\{1,\ldots,q\}$ in $B-\mathcal{P}$. We
may assume $P_{i}=L_{0}^{i}C_{1}^{i}L_{1}^{i}\cdots C_{t}^{i}L_{t}^{i}$ where
$L_{0}^{i}=L_{2i-1}$ and $L_{t}^{i}=L_{2i}.$ Let us consider the segments of
length two of the path $P_{i},$ i.e. the subpaths $L_{j-1}^{i}C_{j}^{i}%
L_{j}^{i}$ of $P_{i}.$ Notice that each of the loops $L_{j-1}^{i}$ and
$L_{j}^{i}$ contains an edge from $C_{j}^{i}.$ Hence, by modifying the
matching $M$ on $C_{j}^{i}$ \textit{we wish to create two additional }%
$3$\textit{-vertices in }$H,$\textit{ one on the loop }$L_{j-1}^{i}$\textit{
and the other on the loop }$L_{j}^{i}.$ Doing this for each segment
$L_{j-1}^{i}C_{j}^{i}L_{j}^{i}$ of $P_{i}$ will result with an even number of
additional $3$-vertices on the interior loops of $P_{i}$ and an with odd
number of additional $3$-vertices on the two end-loops $L_{0}^{i}=L_{2i-1}$
and $L_{t}^{i}=L_{2i}$ of $P_{i}.$ Hence, the $3$-parity of interior loops of
$P_{i}$ will not change, and the $3$-parity of the two end-loops of $P_{i}$
will change. Since the end-loops of paths $P_{i}$ are $3$-odd loops of $H,$
this will convert all $3$-odd loops of $H$ into $3$-even loops in the modified
$H^{\ast},$ which is precisely what we wish.

Let us now return with our consideration to the segment $L_{j-1}^{i}C_{j}%
^{i}L_{j}^{i}$ of $P_{i}$ and our aim to create one additional $3$-vertex on
each of the loops $L_{j-1}^{i}$ and $L_{j}^{i}$. For simplicity, we may denote
$C=C_{j}^{i}$ and notice that $C\in\mathcal{C}_{P}^{\ast}$ since $C$ is
contained at least on the path $P_{i},$ but it may be contained on such a path
for several distinct values of $i.$ Recall that we denoted by $i_{1}\leq
i_{2}\leq\cdots\leq i_{r}$ all indices such that $C$ is contained on
$P_{i_{k}},$ and by $L_{i_{k}}^{\prime}$ and $L_{i_{k}}^{\prime\prime}$ the
pair of loops incident to $C$ on $P_{i_{k}}.$ Hence, $L_{i_{k}}^{\prime
}CL_{i_{k}}^{\prime\prime}$ is a segment of $P_{i_{k}}$ passing through $C$
for each $k=1,\ldots,r.$ By Claim B, we may assume that paths $P_{i_{k}}$ are
not interlaced on $C.$ For each of the loops $L_{i_{k}}^{\prime}$ and
$L_{i_{k}}^{\prime\prime}$ we chose precisely one their edge which belongs to
$C,$ and we denoted them by $e_{i_{k}}^{\prime}$ and $e_{i_{k}}^{\prime\prime
}$ respectively. Also, we denoted $E_{C}=\{e_{i_{k}}^{\prime},e_{i_{k}%
}^{\prime\prime}:1\leq k\leq r\}.$ Finally, we need the notion of an
$M$\emph{-path}, i.e. a subpath of $F$ which begins and ends by an edge of $M$
and its edges alternate between $M$ and $E(F)\backslash M.$

\medskip

\noindent\textbf{Claim C.}\emph{ For a segment }$L_{i_{k}}^{\prime}CL_{i_{k}%
}^{\prime\prime}$\emph{ of }$P_{i_{k}}$\emph{ there exists an }$M$\emph{-path
}$Q_{k}$\emph{ on }$C$\emph{ connecting edges }$e_{i_{k}}^{\prime}$\emph{ and
}$e_{i_{k}}^{\prime\prime}$\emph{ such that }$Q_{k}$\emph{ does not contain
edges of }$E_{C}.$

\smallskip

\noindent In order to prove the claim, we assume the vertex notation
$C=u_{0}u_{1}\cdots u_{g}$ (where $u_{g}=u_{0}$ if $C$ is a cycle) and edge
notation $e_{j}=u_{j-1}u_{j}.$ It follows that $e_{i_{k}}^{\prime}%
=e_{j_{2k-1}}$ and $e_{i_{k}}^{\prime\prime}=e_{j_{2k}}$ are two consecutive
edges of $E_{C}$ on $C.$ Let $Q_{k}$ be the subpath of $C$ connecting edges
$e_{i_{k}}^{\prime}=e_{j_{2k-1}}$ and $e_{i_{k}}^{\prime\prime}=e_{j_{2k}},$
see Figure \ref{Figure10}.a) for illustration. In other words, $Q_{k}$ is the
subpath of $C$ connecting the two selected edges from the two loops of the
segment $L_{i_{k}}^{\prime}CL_{i_{k}}^{\prime\prime}$ of the path $P_{i}.$
Notice that edges of $C$ alternate between $F$ and $M$, since $C\in
\mathcal{C}^{\ast}.$ Also, since $e_{i_{k}}^{\prime}=e_{j_{2k-1}}$ and
$e_{i_{k}}^{\prime\prime}=e_{j_{2k}}$ are the edges of $C$ which belong also
to loops $L_{i_{k}}^{\prime}$ and $L_{i_{k}}^{\prime\prime}$ of $H=G-M,$ this
implies they do not belong to $M.$ Since $C$ is a subgraph of $F,$ these two
edges must belong to $E(F)\backslash M.$ This implies that the path $Q_{k}$
connecting them on $C$ begins and ends by an edge of $M,$ hence $Q_{k}$ is an
$M$-path. Further, as we assumed that paths $P_{i_{k}}$ are not interlaced,
edges $e_{i_{k}}^{\prime}=e_{j_{2k-1}}$ and $e_{i_{k}}^{\prime\prime
}=e_{j_{2k}}$ are two consecutive edges of $E_{C}$ on $C,$ so $Q_{k}$ does not
contain an edge of $E_{C}$ and the clam is established.

\medskip

In view of the vertex notation on $C,$ notice that one end-vertex of $Q_{k}$
is $u_{j_{2k-1}}$ which is the second end-vertex of the $e_{j_{2k-1}},$ and
the other vertex of $Q_{k}$ is $u_{j_{2k}-1}$ which is the first end-vertex of
the edge $e_{j_{2k}}.$ Notice that $u_{j_{2k-1}}$ belongs to the loop
$L_{i_{k}}^{\prime}$ and $u_{j_{2k}-1}$ to the loop $L_{i_{k}}^{\prime\prime}$
of the segment $L_{i_{k}}^{\prime}CL_{i_{k}}^{\prime\prime}$ of the path
$P_{i}.$ In order to make $u_{j_{2k-1}}$ (resp. $u_{j_{2k}-1}$) to be an
additional $3$-vertex on the loop $L_{i_{k}}^{\prime}$ (resp. $L_{i_{k}%
}^{\prime\prime}$) we want to do the following:

\begin{itemize}
\item[$(a)$] remove the edge of $Q_{k}$ incident to $u_{j_{2k-1}}$ (resp.
$u_{j_{2k}-1}$) from $M$ which automatically includes it in $H.$
\end{itemize}

\noindent Thus, the vertex $u_{j_{2k-1}}$ (resp. $u_{j_{2k}-1}$) becomes a new
$3$-vertex in $H$ and if $Q_{k}$ is a path of length one we are done. This is
the case with the example from Figures \ref{Fig08} and \ref{Fig11a}. On the
other hand, if $Q_{k}$ is a longer path, then the neighbor of $u_{j_{2k-1}}$
(resp. of $u_{j_{2k}-1}$) on $Q_{k}$ also becomes additional $3$-vertex in the
modified $H.$ Let $e\in E(F)\backslash M$ be the edge of $Q_{k}$ incident to
the neighbor of $u_{j_{2k-1}}$ (resp. of $u_{j_{2k}-1}$), i.e. $e$ is the
second edge from the end of $Q_{k}.$ Notice the following:

\begin{itemize}
\item[$(b)$] if $e$ belongs to a loop $L\in\mathcal{L}$, then the $3$-parity
of $L$ changes;

\item[$(c)$] if $e$ belongs to a path $P\in\mathcal{P}$, then the existence of
an $F$-matching in the modified $H$ might come into question.
\end{itemize}

\noindent In case $(b)$ we remove both edges of $Q_{k}$ incident to $e$ from
$M,$ which means they are automatically included in $H.$ The inclusion of
these two edges into $H$ makes both end-vertices of $e$ to become $3$-vertices
in the modified $H,$ so the $3$-parity of $L$ does not change. In case $(c)$,
we also we remove both edges of $Q_{k}$ incident to $e$ from $M,$ but we
additionally remove $e$ from $H$ and include it in $M.$ Thus, we avoid
creating new $3$-vertices on the path $P$ which might disrupt its property of
being an $F$-path, but the removal of $e$ from $H$ rewires such paths. In the
sequel, we will establish that this rewiring preserves the property of
$F$-paths, so it suits us.

Let us define more precisely the modification of $M$ according to the previous
consideration. We modify the matching $M$ into $M^{\ast}$ on the cycle $C$ so
that outside paths $Q_{k}$ matchings $M$ and $M^{\ast}$ are the same, and
among edges which belong to a path $Q_{k}$ only edges which belong to a path
of $\mathcal{P}$ will be included into $M^{\ast}.$ Since edges of
$\mathcal{P}$ are not included in $M,$ this implies that generally $M^{\ast
}\not \subseteq M.$ To be more precise, let $Q_{C}$ be the union of all paths
$Q_{k}$ on $C$ for $1\leq k\leq r.$ Let $M_{1}^{\ast}(C)$ be the set of all
edges of $M\cap E(C)$ which do not belong to $Q_{C},$ i.e.
\[
M_{1}^{\ast}(C)=(M\cap E(C))\backslash E(Q_{C}).
\]
Further, let $M_{2}^{\ast}(C)$ be the set of all edges of $Q_{C}$ which belong
to a path in $\mathcal{P}$, i.e.
\[
M_{2}^{\ast}(C)=E(Q_{C})\cap E(\mathcal{P}).
\]
Hence, we define $M^{\ast}(C)=M_{1}^{\ast}(C)\cup M_{2}^{\ast}(C).$ The set
$M^{\ast}(C)$ is illustrated by Figure \ref{Figure10}.b). Next, we have to
join all $M^{\ast}(C)$ into one set $M^{\ast}$. Let $M_{0}$ be the set of all
edges of $M$ which are not contained on any $C\in\mathcal{C}_{P}^{\ast},$ then
we define
\[
M^{\ast}=M_{0}\cup\bigcup\nolimits_{C\in\mathcal{C}_{P}^{\ast}}M^{\ast}(C),
\]
and obviously $M^{\ast}\subseteq E(F).$ Let us now establish several
properties of $M^{\ast}$ and the graph $H^{\ast}=G-M^{\ast}.$

\medskip

\noindent\textbf{Claim D.} \emph{The set }$M^{\ast}$\emph{ is a matching in
}$F.$

\smallskip

\noindent Let us first establish that $M^{\ast}\subseteq E(F).$ To see this,
notice that $M_{0}\subseteq M\subseteq E(F).$ Notice further that for every
$C\in\mathcal{C}_{P}^{\ast}$ it holds that $M_{1}^{\ast}(C)\subseteq
E(C)\subseteq E(F)$ and $M_{2}^{\ast}(C)\subseteq E(Q_{C})\subseteq E(F).$
Hence $M^{\ast}\subseteq E(F).$

It remains to establish that $M^{\ast}$ is a matching. First, $M_{0}$ is a
matching in $F,$ since $M_{0}\subseteq M$ and $M$ is a matching in $F.$
Further, since $M_{0}$ contains edges which do not belong to any
$C\in\mathcal{C}_{P}^{\ast}$, an edge of $M_{0}$ cannot share an end-vertex
with an edge of $M^{\ast}(C)$ for every $C\in\mathcal{C}_{P}^{\ast}.$
Similarly, since any two cycles/paths $C,C^{\prime}\in\mathcal{C}_{P}^{\ast}$
are vertex disjoint, an edge of $M^{\ast}(C)$ cannot share an end-vertex with
an edge of $M^{\ast}(C^{\prime}).$

It remains to show that two edges $e,e^{\prime}\in M^{\ast}(C)$ cannot share
an end-vertex. Recall that $M^{\ast}(C)=M_{1}^{\ast}(C)\cup M_{2}^{\ast}(C),$
where $M_{1}^{\ast}(C)\cap E(Q_{C})=\emptyset$ and $M_{2}^{\ast}(C)\subseteq
E(Q_{C}).$ Hence, in the case of $e\in M_{1}^{\ast}(C)$ and $e^{\prime}\in
M_{2}^{\ast}(C),$ these two edges can share only an end-vertex of $Q_{C}.$
Yet, each path of $Q_{C}$ is an $M$-path, so its end-edges do not belong to
$\mathcal{P}$ and therefore they are not included in $M_{2}^{\ast}(C),$ which
implies $e$ and $e^{\prime}$ are vertex disjoint. Further, in case of
$e,e^{\prime}\in M_{1}^{\ast}(C)$ these two edges are vertex disjoint since
$M_{1}^{\ast}(C)\subseteq M.$ Finally, in case of $e,e^{\prime}\in M_{2}%
^{\ast}(C)=E(Q_{C})\cap E(\mathcal{P}),$ it follows that $e$ and $e^{\prime}$
belong to paths of $\mathcal{P}$ and they are contained in $E(F)\backslash M.$
Since edges of $F$ alternate with the edges of its complement on paths of
$\mathcal{P}$, it follows that $e$ and $e^{\prime}$ are vertex disjoint in
this case also, so the claim is established.

\medskip

\noindent\textbf{Claim E.} \emph{The }$3$\emph{-vertices of }$H$\emph{ are
also }$3$\emph{-vertices in }$H^{\ast}.$

\smallskip

\noindent Let $v$ be a $3$-vertex of $H$ and $e_{1}$ and $e_{2}$ the two edges
incident to $v$ in $F.$ It is sufficient to show that neither of $e_{1}$ and
$e_{2}$ is included in $M^{\ast}.$ Since $v$ is a $3$-vertex in $H,$ edges
$e_{1}$ and $e_{2}$ are not included in $M.$ Notice that $M_{0}\subseteq M$
and $M_{1}^{\ast}(C)\subseteq M$ for every $C\in\mathcal{C}_{P}^{\ast},$ so
edges $e_{1}$ and $e_{2}$ can be included in $M^{\ast}$ only if each of them
belongs to $M_{2}^{\ast}(C)$ for some $C\in\mathcal{C}_{P}^{\ast}.$

To see that neither of the edges $e_{1}$ and $e_{2}$ belongs to $M_{2}^{\ast
}(C),$ recall that $H$ has an $F$-matching $\mathcal{P}$. This means that the
vertex $v$ is an end-vertex of an $F$-path $P_{v}\in\mathcal{P}$. This further
implies that one of the edges $e_{1}$ and $e_{2},$ say $e_{1},$ is an end-edge
of $P_{v}.$ Therefore, the edge $e_{1}$ is not included in any $C\in
\mathcal{C}_{P}^{\ast}.$ Since $M_{2}^{\ast}(C)\subseteq E(Q_{C})\subseteq
E(C)$ for every $C\in\mathcal{C}_{P}^{\ast},$ it follows that $e_{1}$ is not
included in $M^{\ast}.$ As for the edge $e_{2},$ notice that $e_{2}$ must
belong to $\mathcal{L}$ since $e_{1}$ belongs to $\mathcal{P}$. Since
$M_{2}^{\ast}(C)\subseteq E(\mathcal{P})$, this implies that $e_{2}$ cannot
belong to $M_{2}^{\ast}(C)$ either. Hence, we have established that neither of
the two edges $e_{1}$ and $e_{2}$ incident to $v$ in $F$ is included in
$M^{\ast},$ which implies that $v$ is a $3$-vertex in $H^{\ast}$ and the claim
is established.

\medskip

\noindent\textbf{Claim F.} \emph{The subgraph of }$H$\emph{ induced by the
edges }$E(\mathcal{L})$\emph{ is also a subgraph of }$H^{\ast}.$

\smallskip

\noindent Let $L\in\mathcal{L}$ be a loop in $H.$ We have to show that each
edge of $L$ is included in $H^{\ast}$. If an edge of $L$ does not belong to
$F,$ then it is included in both $H$ and $H^{\ast},$ so we consider only edges
of $L$ which belong to $F.$ Let $e\in E(L)\cap E(F)$ be such an edge. Since
$e\in E(L)\subseteq E(H),$ it follows that $e$ is not included in $M.$ Let us
show that $e$ is not included in $M^{\ast}$ either. First, since
$M_{0}\subseteq M$ and $M_{1}^{\ast}(C)\subseteq M$ for every $C\in
\mathcal{C}_{P}^{\ast},$ the edge $e$ can belong to $M^{\ast}$ only if $e\in
M_{2}^{\ast}(C)$ for some $C\in\mathcal{C}_{P}^{\ast}.$ Notice that
$M_{2}^{\ast}(C)\subseteq E(\mathcal{P}),$ so the fact that $e\in
E(L)\subseteq E(\mathcal{L})$ together with $E(\mathcal{P})\cap E(\mathcal{L}%
)=\emptyset$ implies that $e$ does not belong to $M_{2}^{\ast}(C)$ either.
Hence, we have established that $e$ does not belong to $M^{\ast}$ which
implies that $e$ is included in $H^{\ast}.$ We conclude that the loop
$L\in\mathcal{L}$ remains preserved in $H^{\ast},$ and the claim is established.

\medskip

\noindent\textbf{Claim G.} \emph{All }$3$\emph{-vertices of }$H^{\ast}$\emph{
which are not }$3$\emph{-vertices of }$H$\emph{ belong to a subgraph of
}$H^{\ast}$\emph{ induced by }$E(\mathcal{L})$\emph{.}

\smallskip

\noindent Let $v$ be a $3$-vertex of $H^{\ast}$ which is not a $3$-vertex of
$H.$ Notice that a $3$-vertex of $H$ (resp. $H^{\ast}$) is determined by the
matching $M$ (resp. $M^{\ast}$). Since outside $Q_{C}$ the matchings $M$ and
$M^{\ast}$ are the same, it follows that $v$ must belong to $Q_{C}.$ Assume
first that $v$ is an end-vertex of a path from $Q_{C}.$ In this case, the
vertex $v$ does become a $3$-vertex of $H^{\ast}$ and we have to show that it
belongs to $\mathcal{L}$. Recall that $Q_{C}$ is defined as the collection of
paths on $C\in\mathcal{C}_{P}^{\ast}$ connecting the selected edges of loops
$L_{i_{k}}^{\prime}$ and $L_{i_{k}}^{\prime\prime}.$ Since $L_{i_{k}}^{\prime
}$ and $L_{i_{k}}^{\prime\prime}$ are the loops of $\mathcal{L}$ in $H,$ this
implies that an end-edge $v$ of a path of $Q_{k}$ belongs to $\mathcal{L}$.

Assume next that $v$ is an interior vertex of a path from $Q_{C}.$ Since paths
of $Q_{C}$ are $M$-paths, this means that edges of paths from $Q_{C}$
alternate between $M$ and $E(F)\backslash M.$ It follows that $v$ is an
end-vertex of an edge $e\in E(F)\backslash M.$ From $e\not \in M$ we conclude
that $e\in E(H),$ which means that $e$ belongs to either $\mathcal{L}$ or
$\mathcal{P}$. If $e\in\mathcal{L}$, then $v$ does become a $3$-vertex in
$H^{\ast}$, but $e\in\mathcal{L}$ implies $v\in\mathcal{L}$ and the claim
holds. On the other hand, if $e\in\mathcal{P}$ then $e$ is included in
$M^{\ast},$ so $v$ does not become a $3$-vertex in $H^{\ast}.$ Therefore, we
have established that every vertex of $Q_{C}$ which becomes a new $3$-vertex
in $H^{\ast}$ belongs to a loop of $\mathcal{L}$, so the claim is established.

\medskip

Since all $3$-vertices of $H$ belong to the loops from $\mathcal{L}$, Claims
E, F and G imply that all $3$-vertices of $H^{\ast}$ belong to the loops of
$\mathcal{L}$. Let $\mathcal{P}^{\ast}$ be a subgraph of $H^{\ast}$ induced by
all edges which do not belong to any loop of $\mathcal{L}$. From the fact that
all $3$-vertices of $H^{\ast}$ belong to $\mathcal{L}$, we conclude that that
$\mathcal{P}^{\ast}$ is a collection of paths whose end-vertices are
$3$-vertices of $H^{\ast}.$

\medskip

\noindent\textbf{Claim H.} \emph{The collection of paths }$\mathcal{P}^{\ast}%
$\emph{ is an }$F$\emph{-matching in }$H^{\ast}.$

\smallskip

\noindent The set $\mathcal{L}$ of loops in $H$ is an $F$-complement of an
$F$-matching $\mathcal{P}$ in $H,$ so edges of $F$ and $E(G)\backslash E(F)$
alternate on loops from $\mathcal{L}$. This implies that each loop of
$\mathcal{L}$ is of an even length. Since $\mathcal{P}^{\ast}$ is defined as
$H^{\ast}-E(\mathcal{L}),$ and for each $3$-vertex of $H^{\ast}$ two out of
the three edges incident to it must belong to $F,$ it follows that every path
of $\mathcal{P}^{\ast}$ begins and ends with an edge of $F.$ Hence, each path
of $\mathcal{P}^{\ast}$ is an $F\,$-path in $H^{\ast}$, so $\mathcal{P}^{\ast
}$ is an $F$-matching in $H^{\ast}$ which establishes the claim.

\medskip

Let us now consider the complement of $\mathcal{P}^{\ast}$ in $H^{\ast}$,
denote it by $\mathcal{L}^{\ast}.$ The following claim is now rather straightforward.

\medskip

\noindent\textbf{Claim I.} \emph{For the }$F$\emph{-complement }%
$\mathcal{L}^{\ast}$\emph{ of }$\mathcal{P}^{\ast}$\emph{ in }$H^{\ast}%
,$\emph{ it holds that }$\mathcal{L}^{\ast}=\mathcal{L}$\emph{.}

\smallskip

\noindent From $\mathcal{P}^{\ast}$ being defined as $H^{\ast}-E(\mathcal{L}%
),$ and the $F$-complement of $\mathcal{P}^{\ast}$ being defined as $H^{\ast
}-E(\mathcal{P}^{\ast}),$ it follows that $\mathcal{L}^{\ast}=\mathcal{L}$, as claimed.

\medskip

Our final claim is the following.

\medskip

\noindent\textbf{Claim J.} \emph{The }$F$\emph{-complement }$\mathcal{L}%
^{\ast}$\emph{ of }$\mathcal{P}^{\ast}$\emph{ in }$H^{\ast}$ \emph{is }%
$3$\emph{-even.}

\smallskip

\noindent According to Claim I, it holds that $\mathcal{L}^{\ast}=\mathcal{L}%
$, i.e. the $F$-complement of the $F$-matching in $H$ and $H^{\ast}$ is the
same. This means that we need to consider how the $3$-parity of loops from
$\mathcal{L}$ changes from $H$ and $H^{\ast}.$ In order for the claim to hold,
all $3$-odd loop from $\mathcal{L}$ must change $3$-parity from $H$ to
$H^{\ast},$ and all $3$-even loops of $\mathcal{L}$ must preserve $3$-parity
in $H^{\ast}.$ Claims E and F imply that all $3$-vertices which belong to a
loop $L\in\mathcal{L}$ in $H,$ also belong to a loop $L$ in $\mathcal{L}%
^{\ast}$ in $H^{\ast}.$ Hence, only additional $3$-vertices of $H^{\ast},$
which are not $3$-vertices in $H,$ may change the $3$-parity of a loop
$L\in\mathcal{L}$ from $H$ to $H^{\ast}.$

Since all additional $3$-vertices of $H^{\ast}$ belong to paths of $Q_{C}$ for
$C\in\mathcal{C}_{P}^{\ast},$ let us consider how such vertices influence the
parity of $3$-loops from $\mathcal{L}$. We first consider end-vertices of
paths in $Q_{C}.$ Let $v^{\prime}$ and $v^{\prime\prime}$ be two end-vertices
of a path from $Q_{C}$ for some $C\in\mathcal{C}_{P}^{\ast}.$ Recall that a
path of $Q_{C}$ is an $M$-path in $F$ which connects two selected edges
$e_{i_{k}}^{\prime}$ and $e_{i_{k}}^{\prime\prime}$ of the two loops
$L_{i_{k}}^{\prime}$ and $L_{i_{k}}^{\prime\prime}$ which form a segment
$L_{i_{k}}^{\prime}CL_{i_{k}}^{\prime\prime}$ of a path $P_{i_{k}}$ connecting
two $3$-odd loops $L_{2i-1}$ and $L_{2i}$ in $B.$ Hence, each end-vertex
$v^{\prime}$ and $v^{\prime\prime}$ of a path from $Q_{C}$ will change the
$3$-parity of a loop $L_{i_{k}}^{\prime}$ and $L_{i_{k}}^{\prime\prime}$ by
one. Given that each interior loop of $P_{i_{k}}$ belongs to precisely two
segments, the $3$-parity of interior loops of $P_{i_{k}}$ will remain the
same, and the $3$-parity of loops $L_{2i-1}$ and $L_{2i}$ which are the first
and the last loop in $P_{i_{k}}$ will change. Therefore, loops $L_{2i-1}$ and
$L_{2i}$ become $3$-even in $H^{\ast},$ since they were $3$-odd in $H,$ and
all other loops of $\mathcal{L}$ preserve $3$-parity.

Next, we need to consider how interior vertices of paths from $Q_{C}$
influence the $3$-parity of loops from $\mathcal{L}$, when they become
additional $3$-vertices in $H^{\ast}.$ Since edges of paths from $Q_{C}$
alternate between $E(F)\backslash M$ and $M,$ each interior vertex of $Q_{C}$
is an end-vertex of an edge $e\in E(F)\backslash M.$ If $e\in E(L)$ for some
$L\in\mathcal{\mathcal{L}}$, then both end-vertices of $e$ become additional
$3$-vertices in $H^{\ast},$ hence the $3$-parity of $L$ remains preserved in
$H^{\ast}.$ Otherwise, if $e\in E(P)$ for some $P\in\mathcal{P}$, then $e$ is
included in $M^{\ast}$, so the end-vertices of $e$ are not $3$-vertices in
$H^{\ast}.$ We conclude that the $3$-parity of $3$-odd loops changes from $H$
to $H^{\ast},$ and the $3$-parity of all other loops remains the same, so the
claim is established.

\medskip

Hence, we have constructed a matching $M^{\ast}$ in $F,$ so that the graph
$H^{\ast}=G-M^{\ast}$ has an $F$-matching $\mathcal{P}^{\ast}$ for which the
corresponding $F$-complement $\mathcal{L}^{\ast}$ is $3$-even. Finally, we can
now apply Theorem \ref{Lemma_karakterizacija} which yields that $G$ has a
proper $\mathbb{Z}_{4}\times\mathbb{Z}_{2}$-coloring.
\end{proof}

Notice that the above theorem does not apply to the case when there is a
connected component of $B-\mathcal{P}$ which contains an odd number of $3$-odd
loops of $\mathcal{L}$. To estimate how restricting this condition may be, let
us again consider the oddness two snark $G$ from Figure \ref{Fig04} and the
maximum matching $M$ in the $2$-factor $F$ from Figure \ref{Fig07a} which
result in the graph $H=G-M$ from Figure \ref{Fig08}. Let $P$ denote the only
path of $H$ connecting the pair of $3$-vertices $r_{1}=8$ and $r_{2}=16.$
Since $P$ is an $F$-path, the set $\mathcal{P=\{}P\mathcal{\}}$ is the
$F$-matching in $H.$ Hence, the set $\mathcal{P}$ contains only one element
which means that $B-\mathcal{P}$ is obtained from $B$ by removing only one
vertex, the vertex labeled by $\{8,12,14,16\}$ in Figure \ref{Fig10a}. Now,
the choice of $F$ and $M$ in $G$ would not be covered by Theorem
\ref{Prop_correctionM} only if every path connecting vertices $L_{1}^{o}$
labeled with $\{3,5,7,8,9,13\}$\ and $L_{2}^{o}$ labeled with
$\{15,16,19,20,23,26\}$ goes through the vertex $P$ labeled with
$\{8,12,14,16\}.$ In the case of this snark that is obviously not the case,
since there exists a path $P_{B}$ in $B,$ highlighted in red in Figure
\ref{Fig10a}, which connects $L_{1}^{o}$ and $L_{2}^{o}$ and does not contain
$P.$

Moreover, we could not find any maximum matching $M$ in a $2$-factor of any
oddness two snark $G$ that we considered, which does not satisfy the
conditions of Theorem \ref{Prop_correctionM}. Hence, we believe that snarks
$G$ with an $F$-matching in $H$ which are not covered by Theorem
\ref{Prop_correctionM} should be rare, not only among oddness two snarks, but
in general.

\section{Application to oddness two snarks}

Let us now apply the results of the previous two section to oddness two
snarks. Let $G$ be an oddness two snark and $F$ a $2$-factor in $G$ with
precisely two cycles. According to Theorem \ref{Lemma_karakterizacija}, we
have to find a matching $M$ in $F$ such that $H=G-F$ has an $F$-matching whose
$F$-complement in $3$-even. Corollary \ref{Cor_Foddness2} implies that there
exists a matching $M$ in $F$ such that $H$ has an $F$-matching $\mathcal{P}$.
If such a matching $M$ satisfies the assumptions of Theorem
\ref{Prop_correctionM}, then $G$ has a proper $\mathbb{Z}_{4}\times
\mathbb{Z}_{2}$-coloring, but $M$ may not satisfy these assumptions.
Nevertheless, using Theorem \ref{Prop_correctionM} we can obtain the partial
result for oddness two snarks as follows. We first need the following lemma.

\begin{lemma}
\label{Lemma_connected}Let $G$ be an oddness two snark, $F$ a $2$-factor in
$G$ with precisely two odd cycles and $M$ a maximum matching in $F.$ Then the
two $3$-odd loops of $H=G-M$ are connected in $B.$
\end{lemma}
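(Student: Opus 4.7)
The plan is to argue by contradiction: suppose $L_1^o$ and $L_2^o$ lie in distinct components $B_1$ and $B_2$ of $B$. I will use the structure of $B$ together with the bridgelessness of $G$ to produce a cut of size one, which is impossible.

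The starting point is the direct adjacency $L_i^o\sim Q_i$ in $B$ for $i=1,2$, where $Q_i\in\mathcal{C}^*$ is the path obtained from the odd cycle $C_i^F$ of $F$ by deleting the end-edge $e_i$ of the unique $F$-path $P\in\mathcal{P}$ incident to the $3$-vertex $r_i$. Since $r_i$ is not covered by $M$, it has two non-matching $F$-edges; the one distinct from $e_i$ belongs to $L_i^o$ and, not being an end-edge of $P$, lies in $Q_i$. Consequently $Q_1\in B_1$ and $Q_2\in B_2$.

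Next I would introduce the map $\phi\colon V(G)\to\{\text{components of }B\}$ given by $\phi(v)=$ the component containing the unique $C(v)\in\mathcal{C}^*$ with $v\in V(C(v))$. The heart of the argument is to show that $\phi$ is preserved along every edge of $G$ except at most one. Writing $M_F:=E(G)\setminus E(F)$, every $F$-edge (including the end-edges $e_1,e_2$) preserves $\phi$ because its endpoints lie on a common cycle of $F$. An $M_F$-edge $uv$ lies in a unique $L\in\mathcal{L}\cup\mathcal{P}$, and $\phi(u)=\phi(v)$ holds whenever $L$ shares an actual edge with both $C(u)$ and $C(v)$ in $B$; this is automatic when $L\in\mathcal{L}$, since none of the $F$-edges of $L_1^o$, $L_2^o$, or any secondary loop lies in $E_\mathcal{P}$. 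So the only $M_F$-edges potentially failing to preserve $\phi$ are those of $P$.

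The final step is a short case analysis near the $P$-neighbours $x_1,x_2$ of $r_1,r_2$ (so $e_i=r_ix_i$). At every interior $2$-vertex of $P$ other than $x_1,x_2$, the $F$-edge of $P$ at that vertex is an interior edge of $P$, lies in some $C\in\mathcal{C}^*$, and provides the adjacency $P\sim C$ in $B$; hence all interior $M_F$-edges of $P$ preserve $\phi$. Only the first and last $M_F$-edges of $P$, namely those incident to $x_1$ and $x_2$ (and coinciding in the degenerate length-three case where $P$ is isolated in $B$), may fail, because at $x_i$ the only $F$-edge of $P$ is $e_i\in E_\mathcal{P}$ and so lies in no element of $\mathcal{C}^*$. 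A case split on which component of $B$ contains $P$ — either $B_1$, $B_2$, or a third component — shows that in every scenario exactly one of these two (possibly coincident) edges has one endpoint in $V_1:=\phi^{-1}(B_1)$ and the other outside. Thus the cut $(V_1,V(G)\setminus V_1)$ has size exactly one, i.e.\ a bridge, contradicting that $G$ is a snark. The delicate step is the bookkeeping at the four vertices $r_1,r_2,x_1,x_2$, which are precisely the ones where a pair $(L,C)$ can share a vertex without sharing an edge in $B$; once this is handled, the contradiction is immediate.
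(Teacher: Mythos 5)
Your proof is correct, and it is a genuinely more refined argument than the one in the paper, although both proceed by contradiction toward a small edge cut of $G$. The paper's proof partitions only the loops: it lets $\mathcal{L}_1$ be the loops reachable from $L_1^o$ in $B$, sets $V_1=\bigcup_{L\in\mathcal{L}_1}V(L)$, argues that no edge of $G$ leaves $V_1$, and concludes that $G$ is disconnected; it thus only invokes connectedness of $G$, but it does not account for the interior vertices of the $F$-path $P$ (which lie in no $V(L)$) nor for $M$-edges joining a loop of $\mathcal{L}_1$ to such a vertex. Your version instead labels \emph{every} vertex of $G$ by the $B$-component of its element of $\mathcal{C}^*$ (well defined because $\mathcal{C}^*$ partitions $V(G)$), checks that every $F$-edge and every non-$F$-edge lying on a loop preserves the label, and isolates the genuine exceptions: the non-$F$-edges of $P$ at $x_1$ and $x_2$, where the only $F$-edge of $P$ is an end-edge and hence lies in no element of $\mathcal{C}^*$. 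Your case analysis on the location of $P$'s $B$-component correctly shows that exactly one edge crosses the cut $(\phi^{-1}(B_1),\,V(G)\setminus\phi^{-1}(B_1))$ in every scenario (including the length-three case where $P$ is isolated in $B$), so the contradiction you reach is with bridgelessness rather than with connectedness. The price is that you need the stronger hypothesis that $G$ has no bridge, but that is available since $G$ is a snark; what you gain is a complete treatment of the path $P$, which is precisely the place where the shorter argument is silent.
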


\begin{proof}
Denote by $L_{1}^{o}$ and $L_{2}^{o}$ the pair of $3$-odd loops in $H.$ Assume
to the contrary that $L_{1}^{o}$ and $L_{2}^{o}$ are not connected in $B.$ Let
$\mathcal{L}_{1}$ denote the set of all loops of $H$ which are connected to
$L_{1}^{o}$ in $B.$ Notice that $L_{1}^{o}\in\mathcal{L}_{1}$ and $L_{2}%
^{o}\in\mathcal{L}\backslash\mathcal{L}_{1},$ so both $\mathcal{L}_{1}$ and
$\mathcal{L}\backslash\mathcal{L}_{1}$ are non-empty. Also, for any $L_{i}%
\in\mathcal{L}_{1}$ and $L_{j}\in\mathcal{L}\backslash\mathcal{L}_{1}$ it
holds that the set of edges $\{e=uv:u\in V(L_{i})$ and $v\in V(L_{j})\}$ is
empty, otherwise $L_{i}$ and $L_{j}$ would contain an edge from a same cycle
$C_{k}$ in $F,$ so they would be connected in $B,$ a contradiction. Denote
$V_{1}=\{u\in V(G):u\in\bigcup_{L\in\mathcal{L}_{1}}V(L)\}$ and notice that we
established that there is no edge in $G$ with one end-vertex in $V_{1}$ and
the other in $V(G)\backslash V_{1},$ hence $G$ is not connected, a contradiction.
\end{proof}

Now, we can establish the following partial result for oddness two snarks.

\begin{theorem}
Let $G$ be an oddness two snark. If $G$ contains a $2$-factor $F$ with
precisely two odd cycles and at most one even cycle, then $G$ has a proper
$\mathbb{Z}_{4}\times\mathbb{Z}_{2}$-coloring.
\end{theorem}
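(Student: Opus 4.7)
The plan is to combine three earlier results: Corollary~\ref{Cor_Foddness2} to produce an initial matching, Proposition~\ref{Lema_theta} to dispose of the easier sub-case, and Theorem~\ref{Prop_correctionM} to handle the remaining one. First, by Corollary~\ref{Cor_Foddness2}, I fix a maximum matching $M$ in $F$ such that $H = G - M$ admits a simple $F$-matching $\mathcal{P}$. Because $M$ is maximum and the two odd cycles of $F$ are $C_1$ and $C_2$, the graph $H$ has exactly two $3$-vertices $r_1 \in V(C_1)$ and $r_2 \in V(C_2)$, and its main component is either a $\Theta$-graph or a kayak-paddle graph. In the $\Theta$ case, Proposition~\ref{Lema_theta} already yields the required proper $\mathbb{Z}_4 \times \mathbb{Z}_2$-coloring and we are done.

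Assume instead that the main component is a kayak-paddle. Then $\mathcal{P} = \{P\}$ consists of a single $F$-path $P$ between $r_1$ and $r_2$, while $\mathcal{L}$ consists of the two paddles $L_1^o \ni r_1$ and $L_2^o \ni r_2$ (each a $3$-odd loop) together with the secondary cycles of $H$ (each $3$-even, as they contain no $3$-vertex). To invoke Theorem~\ref{Prop_correctionM}, I must verify that every component of $B - \mathcal{P}$ contains an even number of $3$-odd loops; since $L_1^o$ and $L_2^o$ are the only such loops, this reduces to showing that they lie in the same component of $B - \{P\}$. Here the hypothesis on $F$ is used: $\mathcal{C}^*$ consists of at most three elements, namely the paths $C_1^* = C_1 - e_1'$ and $C_2^* = C_2 - e_1''$ (where $e_1', e_1''$ are the end-edges of $P$), and possibly the even cycle $E$. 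Because the $F$-edge at $r_j$ not used by $P$ is an edge of $L_j^o$, we automatically have $L_j^o \sim C_j^*$ in $B$ for $j = 1, 2$, so connectivity of $L_1^o$ and $L_2^o$ in $B - \{P\}$ would follow from any of the adjacencies $L_1^o \sim C_2^*$, $L_2^o \sim C_1^*$, or from a routing through a secondary loop or through $E$.

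The main obstacle is to secure this connectivity in every configuration. My plan is to argue in the spirit of Lemma~\ref{Lemma_connected}: suppose for contradiction that $L_1^o$ and $L_2^o$ fall into distinct components of $B - \{P\}$, and let $V_1$ be the union of the vertex sets of all loops of $\mathcal{L}$ and all members of $\mathcal{C}^*$ that lie in the component of $B - \{P\}$ containing $L_1^o$. Then every edge of $G$ joining $V_1$ to $V(G) \setminus V_1$ must lie on $P$ or in $M$. Because $|\mathcal{C}^*| \leq 3$ and $\mathcal{P} = \{P\}$, this constraint forces a very rigid structure on the resulting cut, and the cyclic edge-connectivity of the snark $G$ either contradicts the separation directly or leaves enough flexibility to re-select the matching $M$ so that the main component of $G - M$ becomes a $\Theta$-graph, reducing to the easy case handled by Proposition~\ref{Lema_theta}. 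Once the connectivity of $L_1^o$ and $L_2^o$ in $B - \{P\}$ is established, Theorem~\ref{Prop_correctionM} supplies the desired proper $\mathbb{Z}_4 \times \mathbb{Z}_2$-coloring, completing the proof.
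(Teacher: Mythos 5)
Your overall strategy matches the paper's: use Corollary~\ref{Cor_Foddness2} to get a maximum matching $M$ with a \emph{simple} $F$-matching $\mathcal{P}=\{P\}$, dispose of the $\Theta$-graph case by Proposition~\ref{Lema_theta}, and reduce the kayak-paddle case to showing that the two $3$-odd loops $L_1^o$ and $L_2^o$ lie in one component of $B-\mathcal{P}$ so that Theorem~\ref{Prop_correctionM} applies. You also correctly identify that $|\mathcal{C}^*|\le 3$ and that $L_j^o$ is $B$-adjacent to the path $C_j^*=C_j-e$ obtained from the odd cycle $C_j$. Up to that point you are aligned with the paper.

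The gap is precisely at the step you yourself flag as ``the main obstacle.'' Your proposed resolution --- a cut argument in the spirit of Lemma~\ref{Lemma_connected} followed by the assertion that cyclic edge-connectivity ``either contradicts the separation directly or leaves enough flexibility to re-select the matching $M$'' --- is not a proof; it is a disjunction of two unverified possibilities, neither of which is carried out. Moreover, the cut argument does not transfer: deleting the single vertex $P$ from $B$ does not correspond to a small edge cut in $G$, since the edges of $G$ crossing your set $V_1$ may include arbitrarily many edges of $M$ together with the edges of $P$, so no contradiction with (cyclic) edge-connectivity follows; and your $V_1$ mixes vertex sets of loops with vertex sets of members of $\mathcal{C}^*$, which do not form compatible partitions. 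The paper closes this step differently and concretely: by Lemma~\ref{Lemma_connected} some $B$-path joins $L_1^o$ to $L_2^o$; take a \emph{shortest} one, $P_B=L_0C_1L_1\cdots C_kL_k$ with $k\le 3$ because $|\mathcal{C}^*|\le 3$. If $P_B$ passes through the vertex $P$, its two distinct $\mathcal{C}^*$-neighbours on $P_B$ must include one of the two paths derived from the odd cycles, say $C_1^*$; but since $\mathcal{P}$ is simple, the interior of $P$ meets no odd cycle of $F$, so $P$ cannot share an edge with $C_1^*$ without forcing a bridge in $G$ --- a contradiction. This is the missing argument you would need to supply; without it the kayak-paddle case is not settled.
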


\begin{proof}
Let $M$ be any maximum matching in $F.$ If $H=G-M$ is a $\Theta$-graph, then
$G$ is colorable according to Proposition \ref{Lema_theta}. If $H=G-M$ is a
kayak-paddle graph, due to Corollary \ref{Cor_Foddness2} we may assume that
the path $P$ in the main component of $H$ is an $F$-path whose interior edges
do not belong to odd cycles of $F.$ If there is a path in $B$ connecting the
pair of $3$-odd loops $L_{1}^{o}$ and $L_{2}^{o}$ of $H$ which does not
contain $P,$ then $G$ is $\mathbb{Z}_{4}\times\mathbb{Z}_{2}$-colorable
according to Theorem \ref{Prop_correctionM}, so we may assume that such a path
does not exist, i.e. that all paths from $L_{1}^{o}$ to $L_{2}^{o}$ lead
through $P$ in $B.$

Let us show that the assumption that all paths from $L_{1}^{o}$ to $L_{2}^{o}$
in $B$ lead through $P$ results in contradiction. Denote by $C_{1}^{o}$ and
$C_{2}^{o}$ the two odd cycles of $F.$ Let $P_{B}=L_{0}C_{1}L_{1}C_{2}%
L_{2}\cdots C_{k}L_{k}$ be a shortest path in $B$ connecting $L_{0}=L_{1}^{o}$
and $L_{k}=L_{2}^{o}.$ Since $F$ contains at most three cycles, we conclude
that $k\leq3.$ Assume that $L_{i}=P$ and notice that $C_{i}\not =C_{i+1}$ must
hold, as otherwise the path $P_{B}$ would not be shortest. Since $F$ contains
at most three cycles, we conclude that either $C_{i}=C_{1}^{o}$ or
$C_{i+1}=C_{2}^{o}.$ If $C_{i}=C_{1}^{o},$ then the fact that $P$ does not
contain interior vertices from odd cycles of $F$ (and thus neither from
$C_{1}^{o}$) implies that $P$ contains a bridge of $G,$ a contradiction. The
argument for $C_{i+1}=C_{2}^{o}$ is analogous, and we are finished.
\end{proof}

Since permutation snarks have a $2$-factor with two odd cycles of a same
length and no even cycles, the above theorem immediately yields the following corollary.

\begin{corollary}
All permutation snarks have a proper $\mathbb{Z}_{4}\times\mathbb{Z}_{2}$-coloring.
\end{corollary}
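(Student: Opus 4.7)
The plan is to reduce the statement directly to the theorem immediately preceding it. Recall that a \emph{permutation snark} is a cubic graph $G$ which admits a $2$-factor $F$ consisting of exactly two disjoint chordless cycles $C_{1}$ and $C_{2}$ of equal length, with the edges not in $F$ forming a perfect matching that realizes a permutation between $V(C_{1})$ and $V(C_{2})$. So from the definition we immediately have that $F$ has precisely two cycles and, in particular, no even cycle outside of the two cycles $C_{1}, C_{2}$ themselves.

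Next I would argue that both $C_{1}$ and $C_{2}$ are odd. If at least one of them were even, then since $F$ consists of only two cycles (which have the same length) they would both be even, and we could $2$-edge-color $E(F)$ properly by alternating two colors on each cycle, and then color all edges of the complementary perfect matching $E(G)\setminus E(F)$ by a third color; this would yield a proper $3$-edge-coloring of $G$, contradicting the fact that $G$ is a snark. Hence both cycles of $F$ are odd, so the oddness of $G$ is at most $2$ (and in fact exactly $2$, since $G$ is a snark).

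With this established, $G$ is an oddness two snark and $F$ is a $2$-factor of $G$ with precisely two odd cycles and zero even cycles, which in particular satisfies the hypothesis of having at most one even cycle. The previous theorem then applies and yields a proper $\mathbb{Z}_{4}\times\mathbb{Z}_{2}$-coloring of $G$.

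The proof is essentially immediate from the preceding theorem, so there is no real obstacle: the only thing to verify carefully is that in a permutation snark both cycles of the distinguished $2$-factor are necessarily odd, which follows from the non-$3$-edge-colorability of $G$ as sketched above.
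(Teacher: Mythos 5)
Your proposal is correct and follows exactly the paper's route: the paper's proof is the single observation that a permutation snark has a $2$-factor with two odd cycles of the same length and no even cycles, so the preceding theorem applies. Your additional verification that both cycles must be odd (since two even cycles would give a proper $3$-edge-coloring) is a detail the paper leaves implicit, and it is argued correctly.
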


\section{Concluding remarks}

In this paper we consider proper abelian colorings of cubic graphs. It is
known that there exist finite abelian groups which do not color all bridgeless
cubic graphs, namely cyclic groups of order smaller than 10, and that all
abelian groups of order at least 12 do. For the remaining abelian groups, the
so called exceptional groups $\mathbb{Z}_{4}\times\mathbb{Z}_{2}$,
$\mathbb{Z}_{3}\times\mathbb{Z}_{3}$, $\mathbb{Z}_{10}$ and $\mathbb{Z}_{11},$
it is known that the existence of a proper $\mathbb{Z}_{4}\times\mathbb{Z}%
_{2}$-coloring of a cubic graph $G$, implies the existence of a proper abelian
coloring of $G$ for all the remaining exceptional groups.

We first provide a characterization of $\mathbb{Z}_{4}\times\mathbb{Z}_{2}%
$-colorable graphs in terms of the existence of a matching $M$ in a $2$-factor
$F$ of a snark $G$ such that the graph $H=G-M$ has an $F$-matching whose
$F$-complement is $3$-even, see Theorem \ref{Lemma_karakterizacija}. This
characterization easily yields that all $3$-edge-colorable graphs are also
$\mathbb{Z}_{4}\times\mathbb{Z}_{2}$-colorable. Further, for oddness two
snarks $G$ it holds that any maximum matching $M$ in a $2$-factor $F$ of $G$
with precisely two odd cycles yields a graph $H=G-M$ whose main component is
either a $\Theta$-graph or a kayak-paddle graph. In the case when the main
component of $H$ is a $\Theta$-graph, the introduced characterization also
implies that $G$ has a proper $\mathbb{Z}_{4}\times\mathbb{Z}_{2}$-coloring.
In view of this characterization, Conjecture \ref{Con_exceptional} can be
restated as follows.

\begin{conjecture}
Every bridgeless cubic graph $G$ has a $2$-factor $F$ and a matching $M$ in
$F$ such that $H=G-M$ has an $F$-matching whose $F$-complement is $3$-even.
\end{conjecture}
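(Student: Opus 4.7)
I would approach this conjecture by combining the two sufficient conditions from Proposition \ref{Tm_proplemP1} and Theorem \ref{Prop_correctionM}, pushing the ideas already sketched for oddness two snarks to the general case. First, a routine reduction: any $3$-edge-colorable cubic graph has the desired structure by the corollary following Theorem \ref{Lemma_karakterizacija}, so only snarks require attention. For a snark $G$, Petersen's theorem yields a $2$-factor $F$, and I would choose $F$ to minimize the number of odd cycles (so that $|F_{\mathrm{odd}}|/2$ equals the oddness of $G$), since both sufficient conditions become easier to verify when $F_{\mathrm{odd}}$ is small.

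The core plan is then to exhibit a pair $(F,M)$ for which the hypotheses of both Proposition \ref{Tm_proplemP1} and Theorem \ref{Prop_correctionM} hold. For the former, I would try to build a perfect matching $M_{\mathrm{even}}$ in $F_{\mathrm{even}}$ for which $K_{\mathrm{odd}}$ admits a perfect matching. One natural way is to pass to the contracted graph $K$ where $F$ is collapsed, observe that a parity argument combined with the $2$-edge-connectivity of $G$ gives a $T$-join with $T=V(K_{\mathrm{odd}})$, and then attempt to choose $M_{\mathrm{even}}$ so that this $T$-join lifts to a perfect matching in $K_{\mathrm{odd}}$ (rather than just a $T$-join whose odd-degree vertices have degree $1$). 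For the latter, having fixed $M$ and the resulting simple $F$-matching $\mathcal{P}$, I would use the connectedness of $G$ together with Lemma \ref{Lemma_connected}-style arguments to show that in the bipartite incidence graph $B$ every component of $B-\mathcal{P}$ contains an even number of $3$-odd loops of $\mathcal{L}$.

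The main obstacle, which the paper explicitly flags, is the simultaneous satisfaction of both sufficient conditions. A single choice of $(F,M)$ may satisfy one but not the other, so I expect the essential work to lie in an augmentation/iteration scheme. My intended scheme is a double loop: the outer loop ranges over $2$-factors $F$, connected by standard $2$-factor exchanges along even subgraphs, and the inner loop modifies $M$ and $M_{\mathrm{even}}$ along $M$-alternating paths in $F$ and along paths in $B$. The invariant to maintain during inner modifications is the existence of a simple $F$-matching (so problem P1 is never reintroduced), while decreasing a suitable potential function measuring the violations of the $3$-even condition of Theorem \ref{Prop_correctionM}; the outer loop is invoked only when the inner process cannot progress, in which case a structural analysis of the stuck configuration should give either a contradiction with $G$ being a bridgeless snark or an explicit alternative $2$-factor $F'$ that breaks the impasse.

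The critical and most uncertain step is this structural analysis at the inner loop's fixed points: showing that a simple $F$-matching $\mathcal{P}$ whose $B-\mathcal{P}$ contains a component with an odd number of $3$-odd loops, and which is furthermore robust under all local $M$-modifications, forces either a bridge in $G$ or a $2$-factor with strictly fewer odd cycles (contradicting our minimality choice). If such a dichotomy can be established, the conjecture follows. Failing a fully general argument, I would at least expect this strategy to extend cleanly to snarks of oddness four and to permutation-like snarks of higher oddness, building incrementally on the oddness-two case already handled in the paper.
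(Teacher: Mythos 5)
The statement you are addressing is not proved in the paper at all: it appears in the concluding remarks as a conjecture, namely Conjecture \ref{Con_exceptional} restated through the characterization of Theorem \ref{Lemma_karakterizacija}. So there is no proof of it to compare against, and your text is not a proof either --- it is a research plan whose decisive steps are precisely the points the paper flags as open. Two concrete gaps. First, you propose to obtain a perfect matching in $K_{\mathrm{odd}}$ by producing a $T$-join in the contracted graph $K$ via a parity argument and then ``lifting'' it. But the paper explicitly observes that while a perfect matching in $K_{\mathrm{odd}}$ induces a $T$-join in $K$ with all odd degrees equal to $1$, the converse ``does not seem to be the case.'' Moreover $K_{\mathrm{odd}}$ itself depends on the choice of $M_{\mathrm{even}}$, and you give no argument that some choice of $M_{\mathrm{even}}$ yields a $K_{\mathrm{odd}}$ with a perfect matching; this is exactly the first open problem the paper states. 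Second, your ``structural analysis at the inner loop's fixed points'' --- showing that a simple $F$-matching $\mathcal{P}$ for which some component of $B-\mathcal{P}$ contains an odd number of $3$-odd loops, robust under all local modifications of $M$, forces either a bridge in $G$ or a $2$-factor with fewer odd cycles --- is declared to be the critical step, but no argument is sketched for it; this is exactly the paper's second open problem. Lemma \ref{Lemma_connected} only gives connectivity of the two $3$-odd loops in $B$ itself for oddness two snarks with a maximum matching $M$; it says nothing about evenness per component of $B-\mathcal{P}$ for general snarks, and the paper's own proof of the oddness-two theorem needs the extra hypothesis of at most one even cycle in $F$ to rule out the bad case.

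In short, you have correctly identified the two obstacles (which matches the paper's own assessment of where the difficulty lies), but both the $T$-join lifting step and the fixed-point dichotomy are unsubstantiated, so the proposal does not establish the conjecture; it reproduces the open status in which the paper leaves it.
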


Motivated by oddness two snarks for which the main component of $H$ is a
kayak-paddle graph, where for a matching $M$ an $F$-complement in $H$ may not
exist, or if it exists it may not be $3$-even, in the third section we
introduce a sufficient condition for the existence of a maximum matching $M$
in a $2$-factor $F$ of $G$ for which $H$ does have an $F$-complement, see
Theorem \ref{Tm_proplemP1}. This sufficient condition seems to be far from
necessary and it leaves space for improvements, so we propose the following problem.

\begin{problem}
For a snark $G,$ find a $2$-factor $F$ in $G$ and a matching $M$ in $F$ such
that $H=G-M$ has an $F$-matching.
\end{problem}

Further, assuming that for a matching $M$ in a $2$-factor $F$ of a snark $G$
the corresponding graph $H=G-M$ does have an $F$-matching, but such that the
$F$-complement is not $3$-even, in Theorem \ref{Prop_correctionM} we further
introduce a sufficient condition under which $M$ can be modified into a
matching $M^{\prime}$ for which the corresponding $F$-complement is $3$-even.
Theorem \ref{Prop_correctionM} applies to all matchings $M$ except in one very
particular case, which should be very rare and which we leave open as the
following problem.

\begin{problem}
Let $G$ be a snark, $M$ a matching in a $2$-factor $F$ of $G$ such that there
exists an $F$-matching $\mathcal{P}$ in $H=G-M,$ and let $B$ be the
corresponding loop-cycle-incidence graph. Show that the snark $G$ has a proper
$\mathbb{Z}_{4}\times\mathbb{Z}_{2}$-coloring in the case when there exists a
connected component of $B-\mathcal{P}$ with odd number of $3$-odd loops.
\end{problem}

Finally, it is worth to note that the characterization of Theorem
\ref{Lemma_karakterizacija} is related to the notion of edge reductions of a
snark which has already been considered in literature. Let $G$ be a cubic
graph and $e=uv$ an edge in $G.$ If $G^{\prime}$ is obtained from $G$ so that
the edge $e$ is removed from $G$ and the resulting $2$-vertices $u$ and $v$
are suppressed, we say that $G^{\prime}$ is obtained by an \emph{edge
reduction} of $G.$ In \cite{Stephen}, the snarks are classified according to
whether they can be reduced to a $3$-colorable graph by one edge reduction. In
\cite{Allie}, the number $e_{r}(G)$ of a snark $G$ is defined as the minimum
number of non-adjacent edges in $G$ such that the removal of these edges and
suppression of incident vertices renders a $3$-edge-colorable graph. There,
the following relation of $e_{r}(G)$ to the resistance $r(G)$ of $G$ is
conjectured, where the \emph{resistance} $r(G)$ is the minimum number of edges
which can be removed from $G$ so that the resulting graph is $3$-edge-colorable.

\begin{conjecture}
Let $G$ be a snark. Then $r(G)\geq2e_{r}(G)$.
\end{conjecture}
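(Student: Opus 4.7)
I would proceed by contrapositive: starting from a resistance set $R \subseteq E(G)$ realizing $r(G)$ together with a proper $3$-edge-coloring $c$ of $G - R$, I would construct a matching $M \subseteq E(G)$ of size at most $r(G)/2$ such that the edge reduction $G/M$ is $3$-edge-colorable. By the minimality of $e_r(G)$, this would give $e_r(G) \le |M| \le r(G)/2$, which is the desired inequality. It is worth noting that the reverse inequality $r(G) \le 2 e_r(G)$ is not hard: from a minimum matching $M$ realizing $e_r(G)$ with $G/M$ properly $3$-edge-colored by some $c'$, one removes one chosen non-$M$ edge at each endpoint of each $M$-edge, obtaining $R$ with $|R| \le 2|M|$ such that $G - R$ is $3$-edge-colorable via the lift of $c'$ extended by coloring each $M$-edge with a color distinct from those of its two surviving neighbors. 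Thus the conjecture really asserts equality $r(G) = 2 e_r(G)$, and only the direction above is non-trivial.

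For the non-trivial direction, the construction of $M$ is guided by pairing the edges of $R$ through Kempe chains of $c$. I would introduce an auxiliary graph $\mathcal{A}$ whose vertex set is $R$, in which $e_1, e_2 \in R$ are declared adjacent whenever there exists an edge $f \in E(G) \setminus R$ together with a finite sequence of Kempe swaps on $c$ such that the resulting recoloring of $G - R$ extends to a proper $3$-edge-coloring of the quotient of $G \setminus (R \setminus \{e_1, e_2\})$ under the single reduction at $f$. Intuitively, a single edge reduction absorbs the properness defect simultaneously at both of its endpoints, so an appropriately chosen $f$ can handle the defects arising from both $e_1$ and $e_2$, and the Kempe swaps serve to align the missing-color patterns at the four endpoints involved. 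A perfect matching in $\mathcal{A}$ whose matched pairs correspond to pairwise non-adjacent reduction edges in $G$ would then yield the sought $M$ of size $|R|/2$.

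The main obstacle is establishing that $\mathcal{A}$ admits such a matching and that the resulting reduction edges indeed form a matching in $G$. The delicate case arises when $R$ clusters around a small vertex set, since candidate reduction edges for different pairs may share endpoints and thereby violate the matching requirement on $M$; additionally, when $|R|$ is odd, a separate argument is needed to absorb the leftover $R$-edge, likely by exploiting the minimality of $R$ to exclude such configurations. I would attempt to control these via a Hall-type condition on $\mathcal{A}$, using the cyclic $4$-edge-connectivity of snarks to guarantee enough global routing room for disjoint alternative reductions, together with careful tracking of how composed Kempe swaps alter the running coloring. This bookkeeping, in particular certifying that the cumulative effect of several Kempe swaps remains consistent across all chosen reduction edges, is where the bulk of the technical work is expected to concentrate.
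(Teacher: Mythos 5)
This statement is not proved in the paper at all: it is an open conjecture quoted from Allie's work, which the paper merely records before relating $e_{r}(G)$ to $\mathbb{Z}_{4}\times\mathbb{Z}_{2}$-colorings in a separate corollary. So your proposal must stand on its own, and as written it is a research programme rather than a proof: every step that would actually yield $r(G)\geq 2e_{r}(G)$ is left open. Concretely: (i) you give no argument that, for a given pair $e_{1},e_{2}$ in the resistance set $R$, a reduction edge $f$ and a sequence of Kempe swaps with the stated property exist --- this is the heart of the matter, since suppressing the endpoints of $f=xy$ only forces the two surviving edges at $x$ (and at $y$) to share a color, and there is no a priori reason the color defects sitting at the four endpoints of $e_{1}$ and $e_{2}$ can be transported to $x$ and $y$ by Kempe chains in $G-R$, whose endpoints you do not control; (ii) the Hall-type condition on $\mathcal{A}$ is asserted, not verified, and cyclic $4$-edge-connectivity is neither part of this paper's notion of snark nor obviously enough to supply the ``routing room'' you invoke; (iii) the requirement that the chosen reduction edges be pairwise non-adjacent, which is built into the definition of $e_{r}(G)$, is acknowledged as delicate but not handled. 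Most seriously, (iv) the parity issue is fatal to the strategy as stated: the cited literature (e.g.\ the paper on snarks with resistance $n$) indicates that $r(G)$ can be odd, and for $|R|=2k+1$ a near-perfect matching of $\mathcal{A}$ plus one extra reduction for the leftover edge gives only $e_{r}(G)\leq k+1$, whereas the conjecture demands $e_{r}(G)\leq k$. Hoping that minimality of $R$ excludes odd $|R|$ is itself an unproved (and apparently false) claim, so the pairing framework cannot close the bound without a genuinely new idea.

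Your side remark on the reverse inequality $r(G)\leq 2e_{r}(G)$ is essentially correct (lift a coloring of the reduced graph, delete one of the two monochromatic edges at each suppressed vertex, then color each reinstated $M$-edge avoiding the colors of its at most two surviving neighbors), but it does not bear on the conjectured direction and does not reduce the problem to an equality statement in any way that helps. In short, there is no gap to patch here so much as an absent core argument; the statement remains open both in the paper and after your proposal.
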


Several results related to $e_{r}(G)$ are reported in \cite{Allie3} and
\cite{Allie2}. Let $\omega(G)$ denote the oddness of a snark $G,$ the results
of this paper imply the following claim.

\begin{corollary}
If a cubic graph $G$ has a proper $\mathbb{Z}_{4}\times\mathbb{Z}_{2}%
$-coloring, then $2e_{r}(G)\leq\left\vert V(G)\right\vert -\omega(G).$
\end{corollary}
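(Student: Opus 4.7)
My plan is to use Theorem~\ref{Lemma_karakterizacija} to extract a matching $M$ in a $2$-factor $F$ of $G$ whose removal and subsequent suppression yields a $3$-edge-colorable graph, and to bound the size of this single matching by $(|V(G)|-\omega(G))/2$. Since $e_{r}(G)$ is by definition at most the cardinality of any such matching, combining these two facts gives $2e_{r}(G)\le |V(G)|-\omega(G)$.

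Let $F$, $M\subseteq E(F)$, an $F$-matching $\mathcal{P}$ of $H=G-M$, and its $3$-even $F$-complement $\mathcal{L}$ be as provided by Theorem~\ref{Lemma_karakterizacija}. Mimicking the $x$-coordinate construction in the backward direction of that theorem, I would define $c\colon E(H)\to\{1,2,3\}$ by setting $c(e)=1$ for every $e\in E(\mathcal{P})$, and by coloring each loop $L\in\mathcal{L}$ alternately with $2$ and $3$, switching colors at each $3$-vertex of $H$ on $L$ and keeping the color fixed at each $2$-vertex of $H$ on $L$. The $3$-evenness of $\mathcal{L}$ is exactly what makes this alternation close up consistently around each loop. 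At every $2$-vertex of $H$, the two incident edges in $H$ lie either both in $\mathcal{P}$ or both on the same loop of $\mathcal{L}$, and in either case receive the same color; hence $c$ is constant along every maximal chain of $2$-vertices of $H$ and descends to a well-defined edge coloring of $G'=(G-M)/\text{suppression}$. At every $3$-vertex of $H$ the unique incident $\mathcal{P}$-edge has color $1$ and the two incident $\mathcal{L}$-edges lie on a common loop, and being separated at a $3$-vertex they receive the two distinct colors of $\{2,3\}$; thus all three colors appear and $c$ descends to a proper $3$-edge-coloring of $G'$.

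For the cardinality bound, $M$ is a matching inside the $2$-factor $F$, and a matching in a cycle of length $n$ has at most $\lfloor n/2\rfloor$ edges; summing over the cycles of $F$ yields $|M|\le (|V(F)|-\mathrm{odd}(F))/2$, where $\mathrm{odd}(F)$ denotes the number of odd cycles of $F$. Since $\mathrm{odd}(F)\ge\omega(G)$ by the definition of oddness and $|V(F)|=|V(G)|$, we obtain $|M|\le (|V(G)|-\omega(G))/2$; combined with $e_{r}(G)\le |M|$ this finishes the proof. The main obstacle I anticipate is the verification that the $\{2,3\}$-alternation really closes up around every loop of $\mathcal{L}$, and it is exactly here that the $3$-even hypothesis from Theorem~\ref{Lemma_karakterizacija} is used essentially; a minor technicality to dispatch is that loops of $\mathcal{L}$ consisting only of $2$-vertices of $H$ collapse entirely under suppression and therefore simply disappear from $G'$, requiring no separate treatment.
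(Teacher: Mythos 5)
Your proposal is correct and follows essentially the same route as the paper: invoke Theorem~\ref{Lemma_karakterizacija} to obtain $F$, $M$, $\mathcal{P}$ and a $3$-even $\mathcal{L}$, observe that removing $M$ and suppressing $2$-vertices yields a $3$-edge-colorable cubic graph (so $e_r(G)\le |M|$), and bound $|M|\le(|V(G)|-\omega(G))/2$ via the odd cycles of the $2$-factor. The only difference is that you spell out the explicit $3$-edge-coloring of the suppressed graph (where the paper merely asserts this step), which is a welcome elaboration and uses the $3$-evenness correctly.
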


\begin{proof}
If a cubic graph $G$ is $\mathbb{Z}_{4}\times\mathbb{Z}_{2}$-colorable, then
Theorem \ref{Lemma_karakterizacija} implies that there exists a $2$-factor $F$
in $G$ and a matching $M$ in $F$ such that $H=G-M$ has an $F$-matching whose
$F$-complement is $3$-even. Since the $F$-complement is $3$-even, suppressing
of $2$-vertices of $H$ yields a $3$-colorable reduction of $G.$ It remains to
establish the upper bound on $M,$ and $M$ being a matching in a $2$-factor $F$
of $G$ implies $\left\vert M\right\vert \leq(\left\vert E(F)\right\vert
-\omega(G))/2.$ Since $F$ is a $2$-factor of $G,$ it follows that $\left\vert
E(F)\right\vert =\left\vert V(G)\right\vert $ which implies the claim.
\end{proof}

\bigskip

\bigskip\noindent\textbf{Acknowledgments.}~~Both authors acknowledge partial
support of the Slovenian research agency ARRS program\ P1-0383 and ARRS
project J1-3002. The first author also the support of Project
KK.01.1.1.02.0027, a project co-financed by the Croatian Government and the
European Union through the European Regional Development Fund - the
Competitiveness and Cohesion Operational Programme. The use of Wolphram Online
Cloud is also acknowledged for the creation of some figures.

\bigskip

\end{document}